\documentclass[12pt,twoside]{amsart}
\usepackage[all]{xy}   
\usepackage {amssymb,latexsym,amsthm,amsmath}
        \usepackage{hyperref}
        \usepackage[utf8]{inputenc}
        \usepackage{xcolor}
        
        \topmargin=1.2cm
        \textheight = 8.3in
        \textwidth = 5.9in
        \setlength{\oddsidemargin}{.8cm}
        \setlength{\evensidemargin}{.8cm}
\long\def\symbolfootnote[#1]#2{\begingroup%
\def\thefootnote{\fnsymbol{footnote}}\footnote[#1]{#2}\endgroup}

\newcommand{\tr}{\ensuremath{{}^t\!}}

\newcommand{\C}{\mathbb C}

\newcommand{\Sp}{\mathrm{Sp} }

\newcommand{\diag}{\textup{diag}}
\newcommand{\R}{\mathbb R}

\def\ch{{\mathbf H}_{\mathbb C}}
\def \Sp {\mathrm{Sp}}
\def \PSp {\mathrm{PSp}}
\def \diag {\mathrm{diag}}
\def \V {\mathrm{V}}
\def \U {\mathrm{U}}
\def \W {\mathrm{W}}
\def \H {\mathbb{H}}

\makeatletter
\def\imod#1{\allowbreak\mkern10mu({\operator@font mod}\,\,#1)}
\makeatother

\newtheorem{theorem}{Theorem}[section]
\newtheorem{lemma}[theorem]{Lemma}
\newtheorem{corollary}[theorem]{Corollary}
\newtheorem{proposition}[theorem]{Proposition}
\newtheorem*{theorem*}{Theorem}
\theoremstyle{definition}
\newtheorem{remark}[theorem]{Remark}
\newtheorem{definition}[theorem]{Definition}

\numberwithin{equation}{section}

\newcommand{\ignore}[1]{}

\newcommand{\mynote}[1]{}
\newcommand{\secref}[1]{Section~\ref{#1}}
\newcommand{\thmref}[1]{Theorem~\ref{#1}}
\newcommand{\lemref}[1]{Lemma~\ref{#1}}

\newcommand{\propref}[1]{Proposition~\ref{#1}}
\newcommand{\corref}[1]{Corollary~\ref{#1}}
\newcommand{\eqnref}[1]{~{\textrm(\ref{#1})}}

\begin{document}
\setcounter{section}{0}
\setcounter{tocdepth}{1}
\title[Reversible Quaternionic Hyperbolic Isometries]{Reversible Quaternionic Hyperbolic Isometries}
\author[Sushil Bhunia]{Sushil Bhunia}
\email{sushilbhunia@gmail.com}
\thanks{Bhunia is supported by a SERB-NPDF (No. PDF/2017/001049) during the course of this work.}
\author{Krishnendu Gongopadhyay}
\email{krishnendu@iisermohali.ac.in, krishnendug@gmail.com}
\thanks{Gongopadhyay acknowledges partial support from SERB-DST MATRICS project:  MTR/2017/000355.  }
\address{Indian Institute of Science Education and Research (IISER) Mohali,
 Knowledge City,  Sector 81, S.A.S. Nagar 140306, Punjab, India}

 \subjclass[2010]{Primary 51M10; Secondary 15B33, 20E45. }
\keywords{ hyperbolic space, quaternions, reversible elements,  real elements, involutions, strongly reversible, strongly real. }
\date{\today}
\begin{abstract}
	Let $G$ be a group. An element $g\in G$ is called \textit{reversible} if it is conjugate to $g^{-1}$ within $G$, and called \textit{strongly reversible} if it is conjugate to $g^{-1}$ by an order two element of $G$. Let $\mathbf{H}^n_{\mathbb{H}}$ be the $n$-dimensional quaternionic hyperbolic space. Let $\PSp(n, 1)$ be the isometry group of $\mathbf{H}^n_{\mathbb{H}}$. In this paper, we classify reversible and strongly reversible elements in $\Sp(n)$ and $\Sp(n,1)$. Also,  we prove that all the elements of $\PSp(n, 1)$ are strongly reversible.
\end{abstract}
\maketitle

\section{Introduction}
Let $G$ be a group. An element $g\in G$ is called \textit{reversible} if $g^{-1}=xgx^{-1}$ for some $x\in G$. The terminology `real' has also been used extensively in the literature to refer the reversible elements, for example, see~\cite{El1},~\cite{El2},~\cite{EFN},~\cite{FZ},~\cite{KN1}, and 
~\cite{KN2},~\cite{ST1},~\cite{ST2},~\cite{TZ},~\cite{Wo}. A non-trivial element $g\in G$ is called an \textit{involution} if $g^2=1$. An element $g\in G$ is called \textit{strongly reversible} if $g^{-1}=xgx^{-1}$ for some $x\in G$ with $x^2=1$. Clearly, a strongly reversible element is reversible. An element $g$ is strongly reversible if and only if $g$ can be written as a product of two involutions. Every element of a conjugacy class which contains a reversible (resp. strongly reversible) element is reversible (resp. strongly reversible), i.e., reversibility (resp. strongly reversibility) is a conjugacy invariant.

Reversible and strongly reversible  group elements have been studied in many contexts. In classical group theory, a theorem of Frobenius and Schur states that  if $G$  is finite, the number of real-valued complex irreducible characters of $G$ equals the number of reversible conjugacy classes of $G$.  This has influenced a lot of research on reversibility from an algebraic point of view. However, the origin of research on `reversibility' can be traced back to works in classical dynamics and classical geometry. We refer the reader to the book by O'Farrel and Short \cite{OS} for an extensive account of reversibility in geometry and dynamics. 

The motivation of the present work comes from the investigations related to the reversibility in classical  geometries. 
Let ${\rm PO}(n,1)$ denote the full isometry group of the $n$-dimensional real hyperbolic space ${\mathbf H}^n_\R$ and let ${\rm PO}_o(n,1)$ denote the identity component, which is the group of orientation preserving isometries of ${\mathbf H}^n_\R$.  It is well-known that every element of ${\rm PO}(n,1)$ is strongly reversible, e.g.  \cite[Theorem 6.11]{OS}. The reversible elements in ${\rm PO}_o(n,1)$  have been classified in \cite{Go1}, and in \cite{sh1} using a different approach, and also see~\cite{LOS}. We refer to \cite[Chapter 6]{OS} for an extensive treatment of reversibility in Euclidean, spherical, and real hyperbolic geometries. It follows from these works that an element $g$ in ${\rm PO}_o(n,1)$ is reversible if and only if it is strongly reversible. 

The reversibility of isometries of the complex hyperbolic space has been investigated by Gongopadhyay and Parker in \cite{GP}. The group ${\rm U}(n,1)$ and ${\rm SU}(n,1)$ act as the holomorphic isometries of the $n$-dimensional complex hyperbolic space $\ch^n$. Reversible elements in these groups were classified in \cite{GP}. It follows from this work that an element $g$ in ${\rm U}(n,1)$ is reversible if and only if it is strongly reversible. An element $g$ in the full isometry group ${\rm PU}(n,1)$ is reversible (resp. strongly reversible) if and only if a certain lift of $g$ in ${\rm U}(n,1)$ is reversible (resp. strongly reversible), for more details, see \cite[Theorem 4.5]{GP}. 

The investigation of strong reversibility in ${\rm PO}_o(n,1)$ and ${\rm PU}(n,1)$ is related to the broader problem of finding the involution length in the respective groups. 
The relation between strong reversibility and involution length in any group $G$ is that: every element of $G$ is strongly reversible if and only if the involution length of $G$ is $2$. 
The involution length of ${\rm PO}_o(n,1)$ is $2$ or $3$ depending explicitly on the congruence class of $n\mod 4$, which is obtained by Basmajian and Maskit in \cite[Theorem 1.1]{basmajian}. Precise involution length for ${\rm PU}(n,1)$ is not known for arbitrary $n$. The involution length of  ${\rm PU}(2,1)$ is $4$ (see \cite[Theorem 1]{pw}) and when $n\geq 3$ the involution length of ${\rm PU}(n,1)$  is at most $8$ (see \cite[Theorem 2]{pw}). 

Even though the decompositions of isometries into involutions in the above groups are known to some extent, not much has been known for their quaternionic counterpart $\Sp(n,1)$. Here we specifically mean that the involution lengths in the above groups are known but the involution length is not known for $\PSp(n,1)$.   
Let $\mathbf{H}^n_{\mathbb{H}}$ be the $n$-dimensional quaternionic hyperbolic space. Let $\mathrm{Isom}(\mathbf{H}^n_{\mathbb{H}})$ denote the isometry group which consists of all the isometries of $\mathbf{H}^n_{\mathbb{H}}$, which is isomorphic to $\PSp(n,1)$ for $n>1$ (for example, see \cite{BH}). For $n=1$, we consider the index two subgroup $\PSp(1,1)$ of $\mathrm{Isom}(\mathbf{H}^1_{\mathbb{H}})$, which is the connected component of $\mathrm{Isom}(\mathbf{H}^1_{\mathbb{H}})$. In this paper, we give a complete classification of reversible and strongly reversible elements in $\Sp(n,1)$.
\begin{theorem}\label{mainth}
Every element of $\mathrm{Sp}(n, 1)$ is reversible.
\end{theorem}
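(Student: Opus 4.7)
The plan is to exploit the trichotomy of isometries of $\mathbf{H}^n_{\mathbb{H}}$ into elliptic, parabolic, and loxodromic types, and to verify reversibility separately in each case by producing an explicit conjugator. Since reversibility is a conjugacy invariant, it suffices to treat a convenient canonical form for a given $g\in\Sp(n,1)$; such canonical forms for quaternionic hyperbolic isometries are standard and are almost certainly recorded in the preceding sections of the paper.

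First I would dispose of the elliptic case. An elliptic $g$ fixes an interior point of $\mathbf{H}^n_{\mathbb{H}}$, so up to conjugation it lies in the stabilizer of the basepoint, which is isomorphic to a product of the form $\Sp(n)\times\Sp(1)$. Every element of a compact symplectic group is reversible, because its right eigenvalue classes all lie on the unit circle in $\mathbb{H}$ and each such class $[\lambda]$ satisfies $[\lambda^{-1}]=[\bar\lambda]=[\lambda]$; hence the elliptic case reduces to reversibility in $\Sp(n)$, already handled earlier in the paper.

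Next the loxodromic case. A loxodromic $g$ has two fixed points on $\partial\mathbf{H}^n_{\mathbb{H}}$, and in the null basis it admits a canonical form
\[
g \;=\; \begin{pmatrix} \lambda & 0 & 0 \\ 0 & A & 0 \\ 0 & 0 & (\bar\lambda)^{-1} \end{pmatrix},
\]
where $\lambda\in\mathbb{H}$ with $|\lambda|>1$ and $A\in\Sp(n-1)$ is the transverse rotational part. I would build the conjugator as a product of an involution of $\Sp(n,1)$ which interchanges the two null fixed axes (sending the $\lambda$-block to an $(\bar\lambda)^{-1}$-block) with a block-diagonal element inverting $A$ inside $\Sp(n-1)$. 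The quaternionic subtlety that $\lambda$ and $\bar\lambda$ lie in the same conjugacy class in $\mathbb{H}^{\times}$ is precisely what lets the swap realize $g\mapsto g^{-1}$; once the block decomposition is set up, this is routine.

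The principal obstacle is the parabolic case, where the fixed boundary point forces one into the Heisenberg-type stabilizer and the canonical form involves a horizontal Heisenberg translation, a vertical translation, and (in the screw parabolic subcase) a nontrivial transverse rotation in a compact $\Sp$-factor. For each subtype I would need to produce a single element of the parabolic stabilizer that simultaneously inverts the horizontal and vertical translational components while conjugating the rotational part to its inverse; the reversibility of the rotational part inside its compact $\Sp$-factor is again what makes this feasible. The delicate points are checking that the pieces assembled on the Heisenberg side really combine into an element of $\Sp(n,1)$, and that the same conjugator works uniformly in the vertical, non-vertical, and screw subcases. Once the parabolic case is resolved, combining the three cases proves the theorem.
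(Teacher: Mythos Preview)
Your plan is correct and mirrors the paper's proof almost exactly: the paper also splits into elliptic, hyperbolic, and parabolic cases, handles the elliptic case by reduction to reversibility in the compact block (conjugation by $\mathrm{diag}(j,\ldots,j)$), handles the hyperbolic case by combining the swap $\begin{pmatrix}0&j\\ j&0\end{pmatrix}$ on the null $2$-plane with the $\Sp(n-1)$ reverser on the space-like complement, and treats the parabolic case by writing down explicit conjugators for the vertical, non-vertical, and screw subcases (Lemmas~4.1--4.4). The only place your outline is thinner than the paper is the parabolic step, where the actual content is precisely those explicit low-rank conjugators; once you supply them, your argument and the paper's coincide.
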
 
 However, in contrast to the real and complex hyperbolic isometries, reversibility does not imply strong reversibility in $\Sp(n,1)$. In fact, we shall see most elements in $\Sp(n,1)$ are not strongly reversible. Let $\Sp(n)$ denote the maximal compact subgroup of $\Sp(n,1)$. A complete classification of strongly reversible elements is not known even for this compact group. O'Farrell and Short raised this as an open problem in their book, see \cite[p. 91]{OS}.  
We also give a complete answer to this problem. We prove the following 
\begin{theorem} \label{strealspn}
An element $g\in \mathrm{Sp}(n)$ is strongly reversible if and only if every eigenvalue class of $g$ is either $\pm 1$ or of even multiplicity. 
\end{theorem}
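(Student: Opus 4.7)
My plan is to reduce to a compact-group normal form and then analyze each eigenvalue block separately. The starting point is that every $g\in\Sp(n)$ is conjugate in $\Sp(n)$ to a diagonal matrix $\diag(\lambda_1,\dots,\lambda_n)$ with each $\lambda_i\in\C$, $|\lambda_i|=1$, and $\mathrm{Im}(\lambda_i)\ge 0$. Since strong reversibility is a conjugacy invariant, I may assume $g$ is already in this form and, after grouping equal entries, write $g=\bigoplus_\mu \mu I_{m_\mu}$, where $\mu$ runs over the distinct class representatives and $m_\mu$ is the quaternionic multiplicity of the eigenvalue class $[\mu]$.

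The first key step is to show that any involution $h\in\Sp(n)$ intertwining $g$ and $g^{-1}$ is automatically block diagonal with respect to this decomposition. The point is that $[\mu]=[\bar\mu]=[\mu^{-1}]$ in $\H^{\times}$, so the class-$[\mu]$ eigenspace of $g^{-1}$ coincides with that of $g$; a direct computation then shows that $h$ preserves each such eigenspace. The classification question therefore reduces, block by block, to: for which $\mu$ and $m_\mu$ does there exist an involution $h_\mu\in\Sp(m_\mu)$ satisfying $h_\mu(\mu I)h_\mu^{-1}=\bar\mu I$? Blocks with $\mu=\pm 1$ are trivial because then $\bar\mu=\mu$ and $h_\mu=I$ already works.

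For a non-real block the centralizer of $\mu I_{m_\mu}$ in $\Sp(m_\mu)$ is $\U(m_\mu)$, and $jI_{m_\mu}$ visibly conjugates $\mu I$ to $\bar\mu I$ because $j\mu j^{-1}=\bar\mu$, so every candidate has the form $h_\mu=jU$ with $U\in\U(m_\mu)$. Expanding $(jU)^2$ by means of the identity $Uj=j\bar U$ and then imposing both $h_\mu^2=I$ and unitarity collapses the existence question to a single condition on $U$: it must be skew-symmetric as well as unitary.

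The final and, in my view, main step is then the classical linear-algebra fact that $\U(m)$ contains a skew-symmetric matrix iff $m$ is even. The ``only if'' uses $\det U=(-1)^m\det U$, which forces $\det U=0$ for odd $m$ and contradicts unitarity; the ``if'' is provided by a block-diagonal construction of $2\times 2$ skew-rotations. Combining everything, $(\Leftarrow)$ is obtained by assembling block involutions $h_\mu$ into a global $h$, while $(\Rightarrow)$ follows because each non-$\pm 1$ block must independently admit such an $h_\mu$, which forces the corresponding $m_\mu$ to be even.
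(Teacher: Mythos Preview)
Your argument is correct and is a genuinely different route from the paper's.

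For the forward direction, the paper works line by line: it views each one-dimensional quaternionic eigenline $\V_{[\lambda_l]}$ as a union of complex ``eigensets'' parametrized by a $[\lambda_l]$-projective sphere $\C\mathbb{P}^1$, and argues that an involution $h$ either pairs two such lines (forcing even multiplicity) or restricts to an orientation-preserving involution of that sphere, hence to $\pm 1$ (forcing $\lambda_l=\pm 1$). For the converse, the paper simply observes that, with the stated multiplicity condition, $g$ is conjugate into $\U(n)$ with eigenvalues closed under inversion, and then quotes the known strong-reversibility criterion in $\U(n)$.

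Your approach instead groups equal eigenvalues into a single block and reduces both directions to one clean algebraic question: for which $m$ does $\Sp(m)$ contain an involution conjugating $\mu I_m$ to $\bar\mu I_m$? Identifying the centralizer of $\mu I_m$ (for non-real $\mu$) with $\U(m)$ and writing any such element as $h_\mu=jU$, the condition $h_\mu^2=I$ becomes $\bar U U=-I$, i.e.\ $U$ is unitary and skew-symmetric, which exists iff $m$ is even by the determinant argument. This is self-contained and avoids both the projective-sphere formalism and any appeal to the $\U(n)$ result; the price is a short computation with $j$-conjugation that the paper's geometric picture sidesteps. Either way the content is the same obstruction, phrased differently: an involution on a single non-real quaternionic eigenline cannot reverse $\mu$, because over $\C$ that would require a $1\times 1$ skew-symmetric unitary.
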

The involution length of $\Sp(n)$ is known to be at most $6$, for example, see \cite[Theorem 5.9]{OS}. As a corollary to the above theorem, we improve the upper bound of the involution length of $\Sp(n)$. The only involutions in $\Sp(1)$ are $\pm 1$. For  $n \geq 2$, we have the following. 
\begin{corollary}\label{cor1}
For $n>1$, every element in $\Sp(n)$ can be expressed as a product of four (resp. five) involutions if $n$ is even (resp. $n$ is odd). 
\end{corollary}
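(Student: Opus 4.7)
The plan is to derive Corollary~\ref{cor1} from Theorem~\ref{strealspn} by exhibiting each $g\in\Sp(n)$ as a product of two strongly reversible elements (if $n$ is even) or of two strongly reversible elements and one involution (if $n$ is odd), and then invoking Theorem~\ref{strealspn} to replace each strongly reversible factor by two involutions.

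For the even case I will diagonalize $g$ in an orthonormal basis so that $g=\diag(e^{i\alpha_1},\ldots,e^{i\alpha_n})$, group the entries into $n/2$ consecutive pairs, and on the $k$-th pair apply the block identity
\[
\diag\bigl(e^{i\alpha_{2k-1}},e^{i\alpha_{2k}}\bigr)=\diag\bigl(e^{i\theta_k},e^{-i\theta_k}\bigr)\cdot e^{i\phi_k}I_2,\qquad \theta_k=\tfrac{\alpha_{2k-1}-\alpha_{2k}}{2},\ \phi_k=\tfrac{\alpha_{2k-1}+\alpha_{2k}}{2}.
\]
Each factor has a single non-$\pm 1$ similarity class, of multiplicity $2$ (since $e^{i\theta_k}$ and $e^{-i\theta_k}$ share the real part $\cos\theta_k$), so Theorem~\ref{strealspn} makes both strongly reversible. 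Assembling block-diagonally gives $g=AB$ with $A,B\in\Sp(n)$ strongly reversible, whence $g$ is a product of four involutions.

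For the odd case ($n\ge 3$) the plan is to reduce to the even case by finding an involution $\iota\in\Sp(n)$ such that $g\iota$ fixes a quaternionic line. Concretely, I will seek a unit $v\in\H^n$ and an involution $\iota$ with $\iota v=g^{-1}v$; then $g\iota$ fixes $\H v$ pointwise and restricts on the orthogonal complement to an element of $\Sp(v^\perp)\cong\Sp(n-1)$. Since $n-1$ is even, the previous case writes this restriction as a product of four involutions, which extend by the identity on $\H v$ to four involutions in $\Sp(n)$; then $g=(g\iota)\iota$ is a product of five.

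The hardest step is producing the involution $\iota$. Given unit $v$ and $w$, the natural candidate is the Householder-type reflection whose $-1$ eigenspace is spanned by $v-w$; this is Hermitian, hence lies in $\Sp(n)$, precisely when $(v+w)\perp(v-w)$, which forces $\omega=\langle v,w\rangle\in\R$. So with $w=g^{-1}v$ I must exhibit a unit $v$ with $\langle v,gv\rangle\in\R$ (using $\langle v,g^{-1}v\rangle=\overline{\langle v,gv\rangle}$). In the diagonalizing basis the imaginary part of $\langle v,gv\rangle$ equals $\sum_j |v_j|^2\sin\alpha_j\,\bar{\hat v_j}\,i\,\hat v_j$, a sum of vectors of prescribed magnitudes in $\mathrm{Im}\,\H\cong\R^{3}$ whose directions can be steered freely on $S^{2}$ by the quaternionic units $\hat v_j$. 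If some $\sin\alpha_j=0$ take $v=e_j$; otherwise choose $v$ supported on two indices with weights balancing $|\sin\alpha_{j_1}|$ and $|\sin\alpha_{j_2}|$, and orient $\hat v_{j_1},\hat v_{j_2}$ so that the two contributions are antipodal in $\mathrm{Im}\,\H$. The Householder reflection for this $v$ then delivers $\iota$ and completes the reduction.
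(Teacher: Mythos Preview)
Your argument is correct. The even case is the same strategy as the paper's: both diagonalize and factor $g=AB$ with $A,B$ strongly reversible, then invoke Theorem~\ref{strealspn}. Your pairwise ``half-angle'' factorization is a bit more local than the paper's cumulative-product factorization (the paper takes $\alpha_1=\diag(\lambda_1,\bar\lambda_1,\lambda_1\lambda_2\lambda_3,\bar\lambda_1\bar\lambda_2\bar\lambda_3,\ldots)$ and $\alpha_2=\diag(1,\lambda_1\lambda_2,\bar\lambda_1\bar\lambda_2,\ldots)$), but the two are interchangeable.

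The odd case is genuinely different. The paper splits $\H^n=\U\oplus\W$ with $\dim\U=3$ and appeals to the Djokovi\'c--Malzan theorem that the involution length of $\Sp(3)$ is at most five, then combines with the even case on $\W$. Your route is more self-contained: you manufacture a single involution $\iota$ (a quaternionic Householder reflection in $v-g^{-1}v$) so that $g\iota$ fixes a line, reducing to $\Sp(n-1)$ with $n-1$ even. The key geometric point---that one can choose a unit $v$ with $\langle v,gv\rangle\in\R$ by steering the imaginary parts $\sin\alpha_j\,\overline{\hat v_j}\,i\,\hat v_j$ in $\mathrm{Im}\,\H$ to cancel---is a nice observation and works exactly as you describe; note also that when all $\sin\alpha_j\neq 0$ your constructed $v$ automatically satisfies $|\langle v,gv\rangle|<1$, so $g^{-1}v\neq v$ and the reflection is well-defined, while if some $\sin\alpha_j=0$ then $v=e_j$ gives $gv=\pm v$ and one either has a fixed line directly or uses $\iota=H_v$. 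Your approach avoids the external citation entirely, at the cost of a short Householder computation; the paper's approach is shorter on the page but black-boxes the $n=3$ case.
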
 
We apply the above theorem to give the following characterization of strongly reversible elements in $\Sp(n,1)$.  
Using conjugation classification~\cite{CG}, we know that semisimple elements in $\Sp(n,1)$ are classified into two types as {\it elliptic} and  {\it hyperbolic}. An element $g\in \Sp(n,1)$ which is not semisimple is called {\it parabolic}.  
However, it has a Jordan decomposition $g=g_{s}g_{u}$, where $g_{s}$ is elliptic, hence semisimple, 
and $g_{u}$ is unipotent. In particular, if a parabolic isometry is unipotent, then it has all eigenvalues $1$. A unipotent parabolic with minimal polynomial $(x-1)^2$ (resp. $(x-1)^3$) is called 
a \emph{vertical translation} (resp. a \emph{non-vertical translation}). For details, see the next section. For terminologies of the next theorem, we refer to Definition \ref{positive}.
\begin{theorem}\label{sr}
Suppose $g$ is an element of ${\rm Sp}(n,1)$.
\begin{enumerate}
\item{Let $g$ be  hyperbolic. Then $g$ is strongly reversible if and only if every positive eigenvalue class of $g$ is either $\pm 1$ or of even multiplicity, and the null eigenvalues of $g$ are real numbers.  }
\item {Let $g$ be  elliptic. Then $g$ is strongly reversible if and only if the 
eigenvalue of negative or indefinite type of $g$ is $1$ or $-1$ and every positive eigenvalue class of $g$ is either $\pm 1$ or of even multiplicity.  }
\item {Let $g$ be a vertical translation.  
Then $g$ is not strongly reversible. }
\item {Let $g$ be a non-vertical translation.  
Then $g$ is strongly reversible. }
\item {Let $g=g_sg_u$ be non-unipotent parabolic. Then $g$ is strongly reversible 
if and only if the null eigenvalue of $g$ is $1$ or $-1$,  the minimal 
polynomial of $g_u$ is $(x-1)^3$, and every positive eigenvalue class of $g$ is either $\pm 1$ or of even multiplicity.  }
\end{enumerate} 
\end{theorem}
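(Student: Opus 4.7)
The plan is to treat each of the five cases separately using a canonical form of $g$ (from the conjugacy classification of~\cite{CG}) and either construct an involution $x \in \Sp(n,1)$ with $xgx^{-1}=g^{-1}$ or obstruct its existence. A useful initial observation is that if $g = g_s g_u$ is the Jordan decomposition, with $g_s$ semisimple and $g_u$ unipotent commuting, and $xgx^{-1} = g^{-1}$, then by uniqueness of the Jordan decomposition $x g_s x^{-1} = g_s^{-1}$ and $x g_u x^{-1} = g_u^{-1}$; so for the non-semisimple cases I am really looking for a single involution that simultaneously reverses both parts.

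For the elliptic case~(2), $g$ fixes a point of $\mathbf{H}^n_\H$ and is conjugate to $\diag(A,\mu)$ with $A \in \Sp(n)$ acting on the positive-definite subspace and $\mu$ on the negative-definite line. Since a reversing involution $x$ also fixes this point, it preserves the positive/negative block decomposition, so $x = \diag(X,Y)$ with $X^2 = 1$ in $\Sp(n)$ and $Y^2 = 1$ in $\Sp(1)$. As the only involutions in $\Sp(1)$ are $\pm 1$, the negative/indefinite block forces $\mu = \pm 1$, while strong reversibility of $A$ is controlled by \thmref{strealspn}, matching the statement.

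For the hyperbolic case~(1), a canonical form is $\diag(\lambda, A, \lambda^{-1})$ in a basis consisting of the two null eigenvectors $v_+, v_-$ spanning the null axis together with an orthonormal basis of the positive-definite complement, where $|\lambda|\ne 1$ and $A \in \Sp(n-1)$. A reversing involution must swap the two null eigenlines, since if it preserved them one would obtain the impossible relation $\alpha\lambda = \lambda^{-1}\alpha$ with $\alpha \in \H^*$ for $|\lambda|\ne 1$; writing $xv_+ = v_-\alpha$ and $xv_- = v_+\beta$ and imposing $x^2=1$ together with preservation of the Hermitian form should then force $\lambda$ to lie in $\R$. The restriction of $x$ to the positive-definite complement is an involution reversing $A$, so \thmref{strealspn} gives the remaining condition.

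For the parabolic cases~(3)--(5), I would use the boundary Heisenberg model at the unique fixed point of $g$. Unipotent translations correspond to elements of the Heisenberg group $N \cong \H^{n-1}\times\operatorname{Im}\H$, with vertical translations lying in the centre and non-vertical ones having a nonzero horizontal component. A suitable $\pi$-rotation in the horizontal direction negates the horizontal part of a non-vertical translation while fixing the centre, giving an explicit reversing involution and establishing~(4). For a vertical translation, any reversing element would have to negate the central imaginary coordinate while acting on the zero horizontal part, and a direct calculation in the stabilizer $(\Sp(n-1)\Sp(1)) \ltimes N$ of the fixed point shows this is impossible, proving~(3). The non-unipotent case~(5) combines the analyses of~(1) and~(4) via the Jordan decomposition, yielding the stated conditions on the null eigenvalue, on the minimal polynomial of $g_u$, and on the positive eigenvalue classes. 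The main obstacle will be the hyperbolic swap analysis, where the quaternionic non-commutativity makes the null-axis computation delicate, and the Heisenberg bookkeeping in the vertical case; the crux throughout is that a reversing involution must respect all the invariant geometric data (eigenspaces, fixed points, null axis), after which the classification collapses onto \thmref{strealspn}.
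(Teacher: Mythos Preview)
Your overall strategy---pass to canonical forms via the conjugacy classification of \cite{CG}, observe that a reversing involution must respect the eigenspace decomposition, and then invoke \thmref{strealspn} on the positive-definite pieces---is precisely the paper's approach. The paper packages the $\widehat{\Sp}(1,1)$ swap analysis you sketch for (1) as \lemref{sp1}, and handles (3) and (4) by explicit matrix computations (\lemref{lemma1}, \lemref{lemma2}) rather than via the Heisenberg model; otherwise the arguments coincide.

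Three places need repair. In (2), the claim that a reversing involution $x$ ``also fixes this point'' is only automatic when the fixed point is unique, i.e.\ when $\mu$ is of \emph{negative} type; if $\mu$ is of \emph{indefinite} type the fixed-point set is a totally geodesic submanifold and $x$ merely permutes it. What is true in general is that $x$ preserves the eigenspace $\V_{[\mu]}$, which has signature $(k,1)$, and one must then rule out a reversing involution for $\mu I_{k+1}$ inside $\Sp(k,1)$ when $\mu\neq\pm 1$ (the paper is equally terse here). In (4), a horizontal $\pi$-rotation that fixes the centre conjugates the Heisenberg element $(a,t)$ to $(-a,t)$, not to its inverse $(-a,-t)$, so a bare rotation does not reverse a non-vertical translation; the paper's involution $h_{NV}$ in \lemref{lemma2} is a reflection in the horizontal line combined with a carefully chosen Heisenberg shift, and something of that shape is needed. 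In (5), the semisimple part $g_s$ of a parabolic element is \emph{elliptic}, so it is case (2) that constrains the null eigenvalue of $g$ (which becomes the negative-type eigenvalue of $g_s$), not case (1); and you need (3) as well as (4) to force the minimal polynomial of $g_u$ to be $(x-1)^3$.
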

However, when we project to the full isometry group, we have strong reversibility for every element. 
\begin{theorem}\label{strealpspn1}
Every element of $\mathrm{PSp}(n,1)$ is  a product of two involutions. 
\end{theorem}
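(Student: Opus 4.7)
The plan is to exploit the quotient structure $\PSp(n,1)=\Sp(n,1)/\{\pm I\}$. The image $[x]$ of $x\in\Sp(n,1)$ is an involution in $\PSp(n,1)$ if and only if $x^2\in\{\pm I\}$, so $[g]$ is a product of two involutions in $\PSp(n,1)$ if and only if there exists $x\in\Sp(n,1)$ with $x^2\in\{\pm I\}$ and $xgx^{-1}\in\{\pm g^{-1}\}$. This provides two new freedoms beyond strong reversibility in $\Sp(n,1)$: we may replace $g$ by $-g$ since $[g]=[-g]$, and we may use conjugators $x$ with $x^2=-I$, not only those with $x^2=I$.

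My plan is to traverse the conjugacy classification of $\Sp(n,1)$ provided by Theorem \ref{sr}. Whenever $g$ or $-g$ already satisfies the hypotheses of Theorem \ref{sr}, that representative is strongly reversible in $\Sp(n,1)$, and therefore $[g]$ is a product of two involutions in $\PSp(n,1)$. This handles the generic part of every conjugacy type (hyperbolic, elliptic, non-vertical translation, and non-unipotent parabolic with admissible eigenvalue data). The remaining obstructions are the vertical translations, the elements whose null eigenvalue is not $\pm 1$, and the elements carrying an odd-multiplicity positive eigenvalue class different from $\pm 1$; each of these fails Theorem \ref{sr} for both $g$ and $-g$.

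The critical case is the vertical translation, because it is the unique type for which neither $g$ nor $-g$ is strongly reversible in $\Sp(n,1)$---negation does not alter the degree of the unipotent part. In the Siegel model with standard Hermitian form $J$, such a $g$ is represented by the unipotent matrix whose sole nontrivial off-diagonal entry is a pure imaginary quaternion $t$ in position $(1,n+1)$. I would choose a pure imaginary unit quaternion $a\in\mathrm{Im}(\H)$ orthogonal to $t$ under the standard inner product on $\mathrm{Im}(\H)$; then $a^2=-1$ and $ata^{-1}=-t$. Setting $x=aI_{n+1}$, one verifies $x\in\Sp(n,1)$ (since $x^*Jx=|a|^2J=J$), $x^2=-I$, and a direct computation gives $xgx^{-1}=g^{-1}$. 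Hence $[g]$ is strongly reversible in $\PSp(n,1)$.

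For the remaining exceptional cases I would use block-diagonal variants of the same construction: on each invariant subspace where the parity or reality hypothesis of Theorem \ref{sr} fails, insert a unit pure imaginary quaternion anticommuting with the relevant imaginary eigenvalue direction, and assemble the blocks so that the total square is $-I$ while the conjugation realises $\pm g^{-1}$; the remaining blocks are conjugated via involutions already supplied by Theorem \ref{sr}. The main conceptual difficulty throughout is the vertical translation, whose resolution rests on the noncommutativity of $\H$, namely the existence of unit pure imaginary quaternions anticommuting with any prescribed $t\in\mathrm{Im}(\H)$. This feature is unavailable over $\R$ or $\C$ and is what ultimately enables the uniformly stronger statement for $\PSp(n,1)$.
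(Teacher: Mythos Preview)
Your reduction to finding $x\in\Sp(n,1)$ with $x^2\in\{\pm I\}$ and $xgx^{-1}\in\{\pm g^{-1}\}$ is exactly right, and your treatment of the vertical translation via a pure imaginary unit $a$ anticommuting with $t$ is correct and is essentially the same construction the paper uses (the paper's $h_V=\diag(xjx^{-1},xjx^{-1})$ plays the role of your $aI$ on the two-dimensional block).

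There is, however, a genuine gap in your block-assembly step. On the ``bad'' blocks you build conjugators $x_i$ with $x_i^2=-I$, but on the ``remaining'' blocks you propose to use the genuine involutions ($x_i^2=+I$) supplied by Theorem~\ref{sr}. The direct sum $x=\bigoplus x_i$ then has $x^2$ equal to a diagonal matrix with both $+1$'s and $-1$'s on the diagonal, which is \emph{not} $\pm I$; so $[x]$ is not an involution in $\PSp(n,1)$. The constraint is global: every block must carry the same sign for $x_i^2$ (and likewise for the sign in $x_ig_ix_i^{-1}=\pm g_i^{-1}$). Routing through Theorem~\ref{sr} is therefore counterproductive, because that theorem hands you square-$+I$ conjugators precisely where you need square-$-I$ ones.

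The paper sidesteps this by not invoking Theorem~\ref{sr} at all. Instead it observes that the proof of Theorem~\ref{mainth} (reversibility in $\Sp(n,1)$) already produces, for every conjugacy type, a conjugator $h$ with $h^2=-I_{n+1}$: for elliptic elements $h=\diag(j,\dots,j)$; for hyperbolic elements $h=\bigl(\begin{smallmatrix}0&j\\ j&0\end{smallmatrix}\bigr)\oplus jI_{n-1}$; for parabolics the conjugators $h_1,h_2$ of Lemmas~\ref{parabolicsp11} and~\ref{parabolicsp21} combined with $jI$ on the $\Sp(n-1)$ or $\Sp(n-2)$ block. In each case both blocks square to $-I$ simultaneously, so $h$ projects to an involution in $\PSp(n,1)$. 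The only case requiring a separate adjustment is the vertical translation $U_V=u_V\oplus I_{n-1}$, where the paper replaces the naive $h_V\oplus I_{n-1}$ by $h_V\oplus jI_{n-1}$ to force both blocks to square to $-I$. The easy repair of your argument is the same: on \emph{every} $\Sp(k)$ block, use $jI_k$ (Proposition~\ref{realspn}) rather than a Theorem~\ref{sr} involution; this conjugates any element of $\Sp(k)$ to its inverse and squares to $-I_k$, so all blocks match.
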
 

\subsubsection*{Structure of the paper}
In \secref{preliminaries}, we cover the preliminaries. In \secref{realstrealpspn}, we explore the reversible and strongly reversible elements in $\Sp(n)$ and $\mathrm{PSp}(n)$.
In \secref{rpf}, we give a complete description of the reversible and strongly reversible elements in $\Sp(n,1)$ and we prove one of our main \thmref{mainth} of this paper. Last two sections are devoted to the proof of \thmref{sr} and \thmref{strealpspn1} respectively.
\section{Preliminaries}\label{preliminaries}
In this section, we fix some notations and terminologies which will be used throughout this paper. Let $\mathbb{H}=\mathbb{R}\oplus \mathbb{R}i\oplus \mathbb{R}j\oplus \mathbb{R}ij$ be the real  quaternions. We identify the subspace $\R \oplus \R i$ of $\mathbb H$ with the standard complex plane in $\mathbb H$.

\subsection{Matrices over quaternions}  Let $\V$ be an $n$-dimensional  right vector space over $\mathbb H$.  Let $T$ be a right linear transformation of $V$. Then $T$ is represented by an $n \times n$ matrix over $\mathbb H$. Invertible linear maps of $\V$ are represented by invertible $n \times n$ quaternionic matrices. The group of all such linear maps is denoted by ${\rm GL}(n, \mathbb H)$. For more details on linear algebra over quaternions, see \cite{rodman}. In the following, we briefly recall the notions that will be used later on.

 \medskip Let $T \in \mathrm{GL}(n, \mathbb H)$ and $v \in \V$, $\lambda \in {\mathbb H}^{\times}$, are such that $T(v)=v \lambda$, then for $\mu \in {\mathbb H}^{\times}$ we have 
$$T(v \mu)=(v \mu) \mu^{-1} \lambda \mu.$$ Therefore eigenvalues of $T$ occur in similarity classes and if $v$ is a $\lambda$-eigenvector, then $v \mu \in v \mathbb H$ is a $\mu^{-1} \lambda \mu$-eigenvector.  Thus the eigenvalues are no more conjugacy invariants for $T$, but the similarity classes of eigenvalues are conjugacy invariant.   

Note that each similarity class of eigenvalues contains a unique pair of complex conjugate numbers.  Often we shall refer them as `eigenvalues', though it should be understood that our reference is towards their similarity classes. In places where we need to distinguish between the similarity class and a representative, we shall denote the similarity class of an eigenvalue representative $\lambda$ by $[\lambda]$. 
\subsection{The group $\Sp(n,1)$} 
In this section, we are following Chen-Greenberg~\cite{CG}.
Let $\V:=\mathbb{H}^{n,1}=\mathbb{H}^{n+1}$ be an $(n+1)$-dimensional right vector space over $\mathbb{H}$ equipped with a  $\mathbb{H}$-Hermitian form 
\[\Phi(z,w)=-\bar{z}_0w_0+\bar{z}_1w_1+\cdots+\bar{z}_nw_n,\] 
where $z=(z_0, z_1, \ldots, z_n), \; w=(w_0, w_1, \ldots, w_n)\in \mathbb{H}^{n+1}$. Therefore the matrix representation of $\Phi$ with respect to the standard basis $\{e_0, e_1, \ldots, e_n\}$ of $\mathbb{H}^{n+1}$ is $J=\diag (-1, 1, \ldots, 1)$. The {\it symplectic group} of signature $(n,1)$ is \[\mathrm{Sp}(n,1)=\{g\in \mathrm{GL}(n+1, \mathbb{H})\mid {}^{t}\bar{g}Jg=J\}.\]

If we restrict the $\mathbb{H}$-Hermitian form $\Phi$ on the orthogonal complement of the one-dimensional subspace $e_0 \mathbb{H}$, then the linear transformations preserving the restricted form is the following group 
\[\mathrm{Sp}(n):=\{g \in \mathrm{GL}(n,\mathbb{H}) \mid {}^{t}\bar{g}g=I_n\},\]  which is 
a compact subgroup of $\mathrm{GL}(n,\mathbb{H})$.  Restricting the positive-definite quaternionic Hermitian form over the standard complex subspace $\C^n \subset \mathbb {H}^n$, we get a copy of the compact unitary group as a subgroup of $\Sp(n)$, and we denote it by ${\rm U}(n)$. Thus
\[\mathrm{U}(n):=\{g \in \mathrm{GL}(n,\mathbb{C}) \subset \mathrm{GL}(n,\mathbb{H})  \mid {}^{t}\bar{g}g=I_n\}.\]

\begin{remark}
Let $P\in \mathrm{M}(n,\mathbb{H})$, then $P=A+Bj$, where $A,B \in \mathrm{M}(n,\mathbb{C})$.
Let \[\varphi : \mathrm{M}(n,\mathbb{H})\hookrightarrow \mathrm{M}(2n, \mathbb{C})\] defined by 
$\varphi(P)=\varphi(A+Bj)=\begin{pmatrix}A&B\\-\bar{B}&\bar{A}\end{pmatrix}$. Observe that the above map $\varphi$ is an injective group homomorphism from $\mathrm{GL}(n, \mathbb{H})$ to $\mathrm{GL}(2n, \mathbb{C})$. Observe that, 
\begin{itemize}
		\item  $\varphi(I_nj)=\begin{pmatrix}0&I_n\\-I_n&0\end{pmatrix}=:\beta$.
		\item $\varphi(P^*)=\varphi(P)^*$.
		\item $P$ is symplectic if and only if $\phi(P)$ is unitary.
	\end{itemize}
\end{remark}
 So, we have 
\[\mathrm{Sp}(n)\cong \mathrm{U}(2n) \cap \mathrm{Sp}(2n, \mathbb{C}),\] 
where $\mathrm{Sp}(2n, \mathbb{C}):=\{g \in \mathrm{GL}(2n, \mathbb{C}) \mid {}\tr g\beta g=\beta \}.$
Here $\mathrm{U}(2n)$ and $\mathrm{Sp}(n)$ are compact groups and defined over $\mathbb{R}$.
The following definition will be used later.  
\begin{definition}\label{multiplicity}
	Let $g\in \mathrm{Sp}(n)$. Let distinct eigenvalues of $g$ be represented by  $e^{i\theta_1}, e^{i\theta_2}, \ldots, e^{i\theta_m}$, $m \leq n$. The right vector space $\mathbb{H}^n$ has the following orthogonal decomposition into eigenspaces:
	\[ \mathbb{H}^n=\V_{\theta_1}\oplus \V_{\theta_2}\oplus\cdots\oplus \V_{\theta_m},\]
	where $\V_{\theta_l}=\{v\in \mathbb{H}^n\mid gv=ve^{i\theta_l}\}$ for $1\leq l\leq m$.
	We define {\it multiplicity } of $e^{i\theta_l}:=\mathrm{dim}\;(\V_{\theta_l})$. Equivalently, it is the repetition of the eigenvalue $e^{i\theta_l}$ in a diagonal form, up to conjugacy,  of $g$. 
\end{definition}
\subsection{Hyperbolic space \texorpdfstring{$\mathbf{H}^n_{\mathbb{H}}$}{}}
 Define  
\begin{align*}
\V_0:=\{v\in \V \mid \Phi(v,v)=0\}, \\
\V_{+}:=\{v\in \V \mid \Phi(v,v) > 0\}, \\
\V_{-}:=\{v\in \V \mid \Phi(v,v) < 0\}.
\end{align*}
Let $\mathbb{P}(\V)$ be the quaternionic projective space, i.e., 
$\mathbb{P}(\V)={\V\smallsetminus \{0\}}/{\sim}$, where 
$u \sim v$ if there exists $\lambda \in \mathbb{H}^{\times}$ such that 
$u=v\lambda$. Here $\mathbb{P}(\V)$ is equipped with the quotient topology and the 
quotient map is $\Pi : \V\smallsetminus \{0\} \rightarrow \mathbb{P}(\V)$. 
The $n$-dimensional quaternionic hyperbolic space is defined to be 
$\mathbf{H}_{\mathbb H}^{n}:=\Pi (\V_{-})$. 
The boundary $\partial \mathbf{H}_{\mathbb H}^{n}$ in 
$\mathbb{P}(\V)$ is $\Pi (\V_0)$.
The isometry group $\mathrm{Sp}(n,1)$ acts by isometries on $\mathbf{H}_{\mathbb H}^{n}$. The actual group of 
isometries of $\mathbf{H}_{\mathbb H}^{n}$ is $\mathrm{PSp}(n,1)={\mathrm{Sp}(n,1)}/{\mathcal{Z}(\mathrm{Sp}(n,1))}$, where $\mathcal{Z}(\mathrm{Sp}(n,1))=\{\pm I_{n+1}\}$ is the center. 
Thus an isometry $g$ of $\mathbf{H}_{\mathbb H}^{n}$ 
lifts to a symplectic transformation $\tilde{g} \in \mathrm{Sp}(n,1)$. The fixed points of $g$ 
correspond to eigenvectors of $\tilde{g}$.
\begin{definition} A subspace $\W$ of $\V$ is called  \emph{space-like}, \emph{light-like}, or \emph{ indefinite} if the Hermitian form restricted to $\W$ is positive-definite, degenerate, or non-degenerate but indefinite respectively. If $\W$ is an indefinite subspace of $\V$, then the orthogonal complement $\W^{\perp}$  is space-like. 
\end{definition} 
\begin{definition}\label{positive}
An eigenvalue $\lambda$ of an element $g\in \Sp(n,1)$ is called \emph{positive type} (resp. \emph{negative type}) if every eigenvector in $\V_{\lambda}$ is in $\V_{+}$ (resp. $\V_{-}$). The eigenvalue $\lambda$ is called \emph{null} if the eigenspace $\V_{\lambda}$ is degenerate. The eigenvalue $\lambda$ is called \emph{indefinite type} if the eigenspace $\V_{\lambda}$ contains vectors in $\V_{+}$ and $\V_{-}$.
\end{definition}
Accordingly, a similarity class of eigenvalues $[\lambda ]$ is \emph{null}, 
\emph{positive} or \emph{negative} according to its representative $\lambda $ is null, 
positive or negative respectively.

\subsection{Cayley transform}
If we change the standard basis by $\{\widehat{e}_0, \widehat{e}_1, \ldots, \widehat{e}_n\}$, where $\widehat{e}_0:=\frac{e_0-e_1}{\sqrt{2}}, \widehat{e}_1:=\frac{e_0+e_1}{\sqrt{2}}$ and $\widehat{e}_l:=e_l$ for $2\leq l\leq n$, then we get a change of basis matrix, which is the following
\[P:=\begin{pmatrix}1/\sqrt{2}&1/\sqrt{2}&0\\-1/\sqrt{2}&1/\sqrt{2}&0\\0&0&I_{n-1}\end{pmatrix}.\]
Then define $C:=P^{-1}$, which is called the \textit{Cayley transform}. 
Now \[\widehat{J}:={}^{t}PJP=\begin{pmatrix}0&-1&0\\-1&0&0\\0&0&I_{n-1}\end{pmatrix}.\] Therefore the corresponding $\mathbb{H}$-Hermitian form is \[\widehat{\Phi}(z,w)=-(\bar{z}_0w_1+\bar{z}_1w_0)+\bar{z}_2w_2+\cdots+\bar{z}_nw_n.\] The corresponding symplectic group is 
\[\widehat{\mathrm{Sp}}(n,1)=\{g\in \mathrm{GL}(n+1, \mathbb{H})\mid {}^{t}\bar{g}\widehat{J}g=\widehat{J}\}.\] Therefore we have  
$\widehat{\mathrm{Sp}}(n,1)=P^{-1}\mathrm{Sp}(n,1)P$, since  $\widehat{J}={}^{t}PJP$.
The Hermitian form $\widehat \Phi$ gives the \emph{Siegel domain} model of the quaternionic hyperbolic space. 

\subsection{Classification of isometries}\label{ehp} The non-identity  elements of $\mathrm{Sp}(n, 1)$ can be classified into three disjoint classes  depending on their fixed points.  An isometry $g$ is called \emph{elliptic} if it has a fixed point in $\mathbf{H}_{\mathbb H}^{n}$. 
It is called \emph{parabolic} if it has exactly one fixed point 
on the boundary $\partial \mathbf{H}_{\mathbb H}^{n}$, and is called \emph{hyperbolic} (or \emph{loxodromic}) if it has exactly two fixed points on the boundary $\partial \mathbf{H}_{\mathbb H}^{n}$. Notice that, if two elements are conjugate, then they have the same (elliptic, parabolic, or hyperbolic) type.

\begin{lemma}\label{hycl}{\rm (Chen-Greenberg )\cite[Theorem 3.4.1]{CG}}
\begin{enumerate} 
\item 
 Two elliptic elements are conjugate if and only if they have the same negative class of eigenvalues, and the same positive classes of eigenvalues.

\item Two loxodromic elements are conjugate if and only if they have the same similarity classes of eigenvalues.

\item Two parabolic elements are conjugate if and only if their semisimple (elliptic) and unipotent components are conjugate.
\end{enumerate} 
\end{lemma}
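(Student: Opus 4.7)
The forward implication in each case is essentially formal. Similarity classes of eigenvalues are conjugation invariants: if $g v = v \lambda$ and $h \in \Sp(n,1)$, then $(hgh^{-1})(hv) = (hv)\lambda$, so $g$ and $hgh^{-1}$ have the same multiset of eigenvalue similarity classes, and the positive/negative/null character of an eigenvalue class is preserved because conjugation in $\Sp(n,1)$ preserves $\Phi$. For the parabolic case, the Jordan decomposition $g = g_s g_u$ with $g_s$ semisimple, $g_u$ unipotent, and $[g_s, g_u] = 1$ is unique, so it is conjugation equivariant. The content is the converse, which I would prove by producing, in each case, a normal form depending only on the stated invariants.

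For elliptic $g$, pick a fixed point $[v] \in \mathbf{H}^n_{\mathbb H}$, so $v \in \V_{-}$ and $gv = v\nu$ with $|\nu|=1$. The $\mathbb H$-line $v\mathbb H$ is $g$-invariant and $\Phi$-negative; its $\Phi$-orthogonal complement $W$ is positive definite of quaternionic dimension $n$ and $g$-invariant, so $g|_W \in \Sp(n)$. By the spectral theorem for the compact group $\Sp(n)$ (orthogonal diagonalization of a quaternionic Hermitian unitary), $g|_W$ is $\Sp(n)$-conjugate to a diagonal matrix whose entries represent the positive eigenvalue classes in standard form. Thus $g$ is $\Sp(n,1)$-conjugate to $\diag(\nu, \lambda_1, \ldots, \lambda_n)$, determined by $[\nu]$ and the multiset $\{[\lambda_i]\}$, proving $(1)$. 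For loxodromic $g$, pass to the Siegel model via the Cayley transform, which sends the two fixed boundary points to $[\widehat{e}_0]$ and $[\widehat{e}_1]$; the fixed $\mathbb H$-lines give $g\widehat{e}_0 = \widehat{e}_0 \lambda$ and $g\widehat{e}_1 = \widehat{e}_1 \mu$, and the identity $\widehat\Phi(g\widehat{e}_0, g\widehat{e}_1)=\widehat\Phi(\widehat{e}_0,\widehat{e}_1) = -1$ forces $\bar\lambda\mu = 1$, so $[\mu]$ is determined by $[\lambda]$. The orthogonal complement of $\widehat{e}_0\mathbb H \oplus \widehat{e}_1 \mathbb H$ is positive definite of quaternionic dimension $n-1$ and $g$-invariant, and $g$ acts on it by an element of $\Sp(n-1)$, which one diagonalizes as above. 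This yields a normal form $\diag(\lambda, \bar\lambda^{-1}, \mu_1,\ldots,\mu_{n-1})$ (in Cayley coordinates) determined entirely by the multiset of similarity classes of eigenvalues of $g$, proving $(2)$.

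For parabolic $g = g_s g_u$, the equivariance of the Jordan decomposition reduces the conjugacy problem to simultaneously matching $g_s \sim h_s$ and $g_u \sim h_u$. First I would conjugate so that $g_s = h_s$ using part $(1)$ applied to the semisimple (elliptic) components; the remaining task is to conjugate $g_u$ to $h_u$ inside the centralizer $C_{\Sp(n,1)}(g_s)$. Using the eigenspace decomposition of $g_s$, this centralizer is a product of smaller symplectic groups acting on the isotypic components; since $g_u$ and $h_u$ commute with $g_s$, they preserve this decomposition, and one concludes by matching the unipotent parts block by block, which in the only nontrivial block reduces to the classification of unipotent elements of $\Sp(k,1)$ by minimal polynomial---the dichotomy $(x-1)^2$ vs.\ $(x-1)^3$ being exactly the vertical vs.\ non-vertical translation distinction recorded after Theorem \ref{sr}. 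I expect the principal obstacle to be the parabolic case: specifically, verifying that the unipotent classification within $C_{\Sp(n,1)}(g_s)$ is finite and controlled only by Jordan type (with no extra continuous invariants arising from the quaternionic structure or from the degenerate eigenspace of $g_u$ on the null line), and ensuring compatibility of the two successive conjugations so that the final conjugator lies in $\Sp(n,1)$ rather than in a merely $\mathbb H$-linear overgroup.
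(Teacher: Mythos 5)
The paper offers no proof of this lemma: it is quoted verbatim from Chen--Greenberg \cite[Theorem 3.4.1]{CG}, so there is no in-paper argument to measure you against. Judged on its own terms, your reconstruction of parts (1) and (2) is correct and is essentially the argument Chen--Greenberg give: split off the negative line through a fixed point (elliptic case) or the two null lines through the boundary fixed points (loxodromic case), check that invariance of the form pins down the second null eigenvalue as $\overline{\lambda}^{-1}$, and diagonalize the compact part on the space-like complement. One cosmetic point: the Cayley transform does not by itself move the fixed points of an arbitrary loxodromic to $[\widehat{e}_0]$ and $[\widehat{e}_1]$; you first conjugate by an isometry carrying the fixed pair to the standard pair, which is possible because $\Sp(n,1)$ acts transitively on pairs of distinct boundary points.

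Part (3) is where the content of the lemma lives, and there you have correctly located but not closed the gap. After arranging $g_s = h_s$, you must show that two unipotent elements of the centralizer $C_{\Sp(n,1)}(g_s)$ that are conjugate in $\Sp(n,1)$ are already conjugate in $C_{\Sp(n,1)}(g_s)$; you state this as the remaining task and describe what would need to be checked, but you do not carry out the check. This is not a formality: the centralizer of $g_s$ mixes a rank-one factor (containing the null line) with compact factors, and one must verify that the unipotent part is supported entirely on the indecomposable block of dimension $2$ or $3$ and that its conjugacy class there is determined by the minimal polynomial $(x-\lambda)^2$ versus $(x-\lambda)^3$ with no continuous moduli (e.g.\ the parameter $s$ with $s+\bar{s}=0$ in the vertical translation must be shown to be normalizable, which is exactly the computation $s = q(ri)q^{-1}$ appearing in Lemma \ref{lemma1} of the paper, together with scaling by the diagonal loxodromic centralizing $g_s$). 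Chen--Greenberg resolve this by exhibiting the explicit normal forms $\lambda u_V \oplus B$ and $\lambda u_{NV} \oplus B$ recorded in the lemma immediately following this one in the paper; without that normal-form analysis your argument for (3) is an outline rather than a proof.
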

The following follows from the conjugacy classification in $\widehat \Sp(n,1)$, e.g. \cite{CG}.
\begin{lemma}
Let $g$ be a parabolic element in $\widehat \Sp(n,1)$. Then, up to conjugacy, $g$ is one of the following forms:
\begin{enumerate}
\item   
$g =\begin{pmatrix}\lambda u_V & 0 \\0& B\end{pmatrix},$ where $B\in \Sp(n-1)$, and   $u_V=\begin{pmatrix}1&0\\s&1
\end{pmatrix}\in \widehat{\Sp}(1,1)$  with $s+\bar{s}=0$, and $|\lambda|=1$.
\item $g=\begin{pmatrix}\lambda u_{NV}&0\\0&B\end{pmatrix}$, where 
$B\in\Sp(n-2)$, and $u_{NV}=\begin{pmatrix}1&0&0\\s&1&\bar{a}
\\a&0&1\end{pmatrix}\in \widehat{{\rm Sp}}(2,1)$ where $a, s \in \mathbb{H}^{\times}$  with $s+\bar{s}=|a|^2$, and $|\lambda|=1$.
\end{enumerate}
\end{lemma}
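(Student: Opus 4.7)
The plan is to reduce a parabolic $g$ to the stated normal form in two stages: first exploit the unique boundary fixed point of $g$ to put $g$ in block upper-triangular form inside a parabolic subgroup, then use the remaining freedom together with the Jordan decomposition $g = g_s g_u$ to separate off the compact factor $B$.

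First, since $g$ is parabolic it has a unique fixed point on $\partial \mathbf{H}^n_{\mathbb{H}}$. Using the transitive action of $\widehat{\Sp}(n,1)$ on the boundary, I would conjugate $g$ so that this fixed point corresponds to the isotropic basis vector $\widehat{e}_1$ of the Siegel frame. Then $g\,\widehat{e}_1 = \widehat{e}_1 \lambda$ with $|\lambda|=1$, and the identity ${}^t\bar{g}\,\widehat{J}\,g = \widehat{J}$ carried out block by block pins $g$ to the form
\[
g = \begin{pmatrix}\lambda & 0 & 0 \\ \lambda s & \lambda & \lambda\, {}^t\bar{a}\, G \\ a & 0 & G\end{pmatrix}, \qquad G \in \Sp(n-1),\ a \in \mathbb{H}^{n-1},\ s+\bar{s}=|a|^2.
\]
Thus $g$ lies in the parabolic subgroup stabilizing the isotropic line $\widehat{e}_1 \mathbb{H}$.

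Next, I would split into two cases according to whether $a=0$. Since the unipotent part $g_u$ of $g$ has minimal polynomial $(x-1)^2$ or $(x-1)^3$ (no longer Jordan block is compatible with a Hermitian form of signature $(n,1)$), these cases correspond exactly to the vertical and non-vertical types. When $a=0$ the matrix is already block diagonal: setting $u_V = \begin{pmatrix}1&0\\ s&1\end{pmatrix}$ and $B = G$ yields form (1), with $s+\bar{s}=0$. When $a \neq 0$, I would conjugate by a block element $\mathrm{diag}(I_2,h)$ with $h \in \Sp(n-1)$; using the transitivity of $\Sp(n-1)$ on spheres in $\mathbb{H}^{n-1}$, choose $h$ so that $h^{-1}a$ is supported on its first coordinate.

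The delicate step is to arrange that the same $h$ simultaneously reduces $G$ to the block shape $\mathrm{diag}(1,B)$ with $B \in \Sp(n-2)$. This is where the commutation $g_s g_u = g_u g_s$ of the Jordan decomposition enters: the nonzero direction of $a$ records the nontrivial direction of $g_u$ inside $\mathbb{H}^{n-1}$, and commutativity forces $g_s$ (hence $G$) to preserve that line. After rotating $a$ onto the first axis, $G$ therefore decomposes as $\mathrm{diag}(1,B)$, and the resulting upper $3\times 3$ block becomes $\lambda u_{NV}$ with the stated $u_{NV}$ and relation $s+\bar{s}=|a|^2$. The main obstacle is precisely this simultaneous reduction, which ultimately relies on the Chen--Greenberg conjugacy invariants (\lemref{hycl}) to pin down the commuting pair $(g_s, g_u)$ up to the action of the parabolic subgroup.
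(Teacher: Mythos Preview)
The paper does not actually prove this lemma: it simply records that the normal forms follow from the Chen--Greenberg conjugacy classification and moves on. Your outline is therefore more explicit than anything in the paper, and the overall strategy---fix the boundary point, pass to the stabilising parabolic subgroup, then use the Jordan decomposition $g=g_sg_u$ to peel off the compact block---is the right way to unwind the Chen--Greenberg statement.

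There is, however, one concrete slip in the non-vertical case. You assert that after rotating $a$ onto the first axis, $G$ decomposes as $\mathrm{diag}(1,B)$. This is not what the commutation $g_sg_u=g_ug_s$ gives. Writing $g_s=\mathrm{diag}(\lambda,\lambda,G)$ and $g_u$ in Heisenberg form with column $a_u=G^{-1}a$, commutativity forces $Ga_u=a_u\lambda$; thus $G$ acts on the $a$-line with eigenvalue in the class $[\lambda]$, not $1$. After aligning one obtains (up to a further conjugation inside that line) $G=\mathrm{diag}(\lambda,B)$, and only then does the upper $3\times 3$ block become
\[
\begin{pmatrix}\lambda & 0 & 0\\ \lambda s & \lambda & \lambda\bar a_1\lambda\\ a_1 & 0 & \lambda\end{pmatrix}
=\lambda\begin{pmatrix}1 & 0 & 0\\ s & 1 & \overline{\lambda^{-1}a_1}\\ \lambda^{-1}a_1 & 0 & 1\end{pmatrix}
=\lambda\,u_{NV}.
\]
With $\mathrm{diag}(1,B)$ the $(3,3)$ entry of that block would be $1$, and it would not equal $\lambda u_{NV}$ unless $\lambda=1$. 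The phrase ``$g_s$ (hence $G$)'' is also a little loose: $G$ is the lower block of $g$, not of $g_s$; what one actually shows is that the unipotent part of $G\in\Sp(n-1)$ is trivial, so $G$ coincides with the corresponding block of $g_s$, and then the commutation relation pins down its action on the $a$-line as above.
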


\section{Reversible and Strong Reversible Elements in \texorpdfstring{$\mathrm{PSp}(n)$}{}}\label{realstrealpspn}
\subsection{Reversible elements in $\Sp(n)$}
Part (1) of the following result is known, see \cite[Theorem 5.7]{OS}. We will give a shorter and different proof that will imply part (2) of the theorem that seems unavailable in the literature. 
\begin{proposition}\label{realspn}
\noindent
\begin{enumerate} 
\item Every element of $\Sp(n)$ is reversible. 
\item Every element of $\mathrm{PSp}(n)$ is strongly reversible.
\end{enumerate} 
\end{proposition}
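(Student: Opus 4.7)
The plan is to handle both parts simultaneously by producing an explicit reversing element whose square is $-I$, which then descends to an involution in $\mathrm{PSp}(n)$.

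First I would use the spectral theorem for quaternionic unitary matrices: any $g\in\Sp(n)$ can be written as $g=PDP^{-1}$ with $P\in\Sp(n)$ and $D=\diag(e^{i\theta_1},\ldots,e^{i\theta_n})$, where the $e^{i\theta_k}$ are standard complex representatives of the similarity classes of eigenvalues of $g$. This is the quaternionic analog of the diagonalizability of unitary matrices and is a standard consequence of compactness together with orthogonality of eigenspaces for distinct similarity classes.

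Next, the key observation is purely quaternionic: $jij^{-1}=-i$, so $j\,e^{i\theta}\,j^{-1}=e^{-i\theta}$ for every $\theta\in\mathbb{R}$. Setting $J_0:=\diag(j,j,\ldots,j)$, I check that $J_0\in\Sp(n)$ (each diagonal entry satisfies $\bar{j}\cdot j=1$) and compute
\[
J_0\,D\,J_0^{-1}=\diag(j\,e^{i\theta_k}\,j^{-1})=\diag(e^{-i\theta_k})=D^{-1}.
\]
Therefore $g^{-1}=PD^{-1}P^{-1}=h\,g\,h^{-1}$ with $h:=PJ_0P^{-1}\in\Sp(n)$, proving part (1). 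Crucially, $J_0^2=\diag(j^2,\ldots,j^2)=-I$, so $h^2=-I$.

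For part (2), the element $h$ is not an involution in $\Sp(n)$, but in the quotient $\mathrm{PSp}(n)=\Sp(n)/\{\pm I\}$ the image $\bar h$ satisfies $\bar h^2=\overline{-I}=\bar I$, so $\bar h$ is a (non-trivial) involution. The conjugation relation $hgh^{-1}=g^{-1}$ descends to $\bar h\,\bar g\,\bar h^{-1}=\bar g^{-1}$, giving strong reversibility of every element of $\mathrm{PSp}(n)$. I do not anticipate a genuine obstacle: the only delicate point is invoking the quaternionic spectral theorem so that the conjugating matrix $P$ actually lies in $\Sp(n)$, which ensures $h$ does as well; everything else is the one-line identity $je^{i\theta}j^{-1}=e^{-i\theta}$.
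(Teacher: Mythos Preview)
Your argument is correct and essentially identical to the paper's: diagonalize within $\Sp(n)$, use the identity $j\lambda=\bar\lambda j$ for $\lambda\in\mathbb{C}$ to see that $h=\diag(j,\ldots,j)$ (conjugated back) reverses $g$, and note $h^2=-I_n$ so that $h$ becomes an involution in $\mathrm{PSp}(n)$. The only cosmetic difference is that the paper simply says ``up to conjugacy $g=\diag(\lambda_1,\ldots,\lambda_n)$'' where you carry the conjugator $P$ explicitly.
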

\begin{proof} We know that every element of $\mathrm{Sp}(n)$ is semisimple. So up to conjugacy we can write \[g=\mathrm{diag}(\lambda_1, \lambda_2, \ldots, \lambda_n),\] where $\lambda_k\in \mathbb{C}$ with $|\lambda_k|=1$ for all $k=1,2, \ldots, n$. Now observe that  $j\lambda_k=\bar{\lambda}_k j$ for all $k=1, 2, \ldots, n$. Therefore \[g^{-1}=hgh^{-1},\] where $h=\mathrm{diag} (j, j, \ldots, j)\in \mathrm{Sp}(n)$. Hence every element of $\mathrm{Sp}(n)$ is reversible. 

Now observe that $h^2=-I_n$, so every element of $\mathrm{PSp}(n)$ is strongly reversible. \end{proof}
In \cite[Chapter 5]{OS}, O'Farrell and Short pose the problem of characterizing strongly  reversible elements in $\mathrm{Sp}(n)$.  The following result answers this problem. But  before proving it, we recall the concept of projective points from \cite[Section 3]{gk1}. 

\subsection{Projective points} 
\medskip Let $g \in \Sp(n)$. Let $\lambda \in \H\setminus\R$ be a chosen eigenvalue of $g$ in the similarity class $[\lambda]$.  We may assume that $[\lambda]$ has  multiplicity one, i.e., the $[\lambda]$-eigenspace has dimension one. Thus, we can identify the eigenspace $\V_{[\lambda]}$ with $\H$. Consider the $\lambda$-\emph{eigenset}: $S_{\lambda}=\{ x \in \V \ | \ gx =x \lambda \}$. Note that this set is the complex line $x \mathcal{Z}_{\mathbb{H}}(\lambda)$ in $\H$. Note that for $q \in \H \setminus \R$, $S_{q \lambda q^{-1}}=S_{\lambda}q$. 

\medskip  Identify $\H$ with $\C^2$. Two non-zero quaternions $q_1$ and $q_2$ are equivalent if  $q_2=q_1 c$, $c \in \C \setminus 0$. This equivalence relation projects $\H$ to the one-dimensional complex projective space $\C \mathbb{P}^1$.  Thus, $\V_{[\lambda]}=\sqcup_{q \in \C \mathbb{P}^1} S_{q \lambda q^{-1}}$. The $\C  \mathbb{P}^1$ associated to $[\lambda]$ in this way will be called the  $[\lambda]$-projective sphere. 

\subsection{Proof of \thmref{strealspn}}
Suppose $g=\diag\;(\lambda_1, \lambda_2, \ldots, \lambda_n)$ is strongly reversible. Then $g^{-1}=hgh^{-1}$ for some $h\in \mathrm{Sp}(n)$ such that $h^2=I_n$. Then we have the following orthogonal decomposition: 
\[\mathbb{H}^n=\V_{[\lambda_1]}\oplus \V_{[\lambda_2]}\oplus\cdots \oplus \V_{[\lambda_n]},\]
where $\mathrm{dim}\V_{[\lambda_l]}=1$ for all $l=1, 2, \ldots, n$. Note that 
$\V_{[\lambda_l]}=\sqcup_{q \in \C \mathbb{P}^1} S_{q \lambda_l q^{-1}}$. Now, if $v$ is an eigenvector of $g$ in the $\lambda_l$-eigenset $S_{\lambda_l}$, then
$$g^{-1}(h(v))=hg(v)=h(v) \lambda_l,$$
i.e., $g(h(v))=h(v) \lambda_l^{-1}$. Therefore, $h(v)$ is an eigenvector of $g$ in the $\lambda_l^{-1}$-eigenset $S_{\lambda_l^{-1}}$. 
		Thus, either $h$ maps $\V_{[\lambda_l]}$ to $\V_{[\lambda_t]}$ with $\lambda_t=\overline{\lambda}_l$ for some $t$, or, $h$ preserves $\V_{[\lambda_l]}$ and permutes the $S_{\lambda_l}$'s inside $\V_{[\lambda_l]}$. In the first case $[\lambda]$ is of even multiplicity. In the second case, $h$ is an involution on $\V_{[\lambda_l]}$, and acts as an orientation-preserving involution on the $[\lambda_l]$-projective sphere.  Hence, $h|_{\V_{[\lambda_l]}}$ is $\pm 1$, that implies  $\lambda_l=\pm 1$. 

Conversely, suppose the assertion holds. Then, up to conjugacy, $g$ is conjugate to an element $\tilde g$ of ${\rm U}(n)$ that has the property that if $\lambda \neq \pm 1$ is an eigenvalue, then so is $\lambda^{-1}$. Consequently, it is strongly reversible in ${\rm U}(n)$. Hence $g$ is strongly reversible in $\Sp(n)$. 

This completes the proof.

\subsection{Proof of \corref{cor1}}
Let $g \in \Sp(n)$. Up to conjugacy, we can choose $g$ to be the diagonal element
$$g=\begin{pmatrix} \lambda_1 & & &  \\ & \lambda_2 & &  \\ & & \ddots & \\ & & & \lambda_n \end{pmatrix},$$
where $\lambda_l$'s are complex numbers with $|\lambda_l|=1$. 
First, we prove this result when $n$ is even, say $n=2m \; (m\geq 1)$.
Note that we can write $g=\alpha_1 \alpha_2$, where
\begin{equation}\label{r3} \alpha_1=\hbox{diag}(\lambda_1, \overline \lambda_1, ~ \lambda_1\lambda_2 \lambda_3, \overline \lambda_1 \overline \lambda_2 \overline \lambda_3, ~\ldots, ~ \lambda_1 \lambda_2 \cdots \lambda_{2k+1}, \overline \lambda_1 \overline \lambda_2\cdots \overline \lambda_{2k+1}, \ldots) \end{equation}
\begin{equation} \label{r4} \alpha_2=\hbox{diag}(1, ~\lambda_1 \lambda_2, \overline \lambda_1 \overline \lambda_2, ~\ldots, ~\lambda_1 \lambda_2 \cdots \lambda_{2k}, \overline\lambda_1 \overline \lambda_2 \cdots \overline \lambda_{2k}, \ldots).\end{equation}
Since $\lambda_j$ and $\bar \lambda_j$ belong to the same similarity class, 
by the previous theorem, we see that both $\alpha_1$ and $\alpha_2$ are strongly reversible in $\Sp(n)$. Thus, $g$ can be written as a product of four involutions. 

When $n$ is odd, say $n=2m+1=2(m-1)+3 \; (m\geq 1)$. 
Since $g$ is semisimple, then the right vector space $\mathbb{H}^n$ has an orthogonal decomposition into $g$-invariant subspaces: $\mathbb{H}^n=\U\oplus \W$ with $\dim \U=3$ and $\dim \W=n-3$. Then $g|_{\U}$ can be considered as an element in $\Sp(3)$ and $g|_{\W}$ as an element in $\Sp(n-3)$ and $g=g|_{\U}\oplus g|_{\W}$. 
Now from a result of Djokovi\'{c} and Malzan, see \cite[Theorem 3]{dm}, the involution length of $\Sp(3)$ is at most five, i.e., $g|_{\U}$ can be written as a product of five involutions, say $g|_{\U}=r_1r_2r_3r_4r_5$, where $r_l^2=1$ for $l=1,2,3,4,5$. In view of the previous case, $g|_{\W}\in \Sp(2(m-1))$ can be expressed as a product of four involutions, say $g|_{\W}=s_1s_2s_3s_4$, where $s_k^2=1$ for $k=1,2,3,4$. 
Hence $g=(r_1\oplus s_1)(r_2\oplus s_2)(r_3\oplus s_3)(r_4\oplus s_4)(r_5\oplus I_{2(m-1)})$, where $r_k\oplus s_k$ are involutions. 
Therefore the involution length of $\Sp(2m+1)$ is at most $5$.

This completes the proof. 

\section{Reversible Elements in \texorpdfstring{$\mathrm{Sp}(n,1)$}{}} \label{rpf} 
For parabolic and hyperbolic elements, we will work in $\widehat{\mathrm{Sp}}(n,1)$. To prove the main theorem we need the following lemmas.
\begin{lemma}[Vertical translation]\label{lemma1}
Let $u_V:=\begin{pmatrix}1&0\\s&1\end{pmatrix}\in \widehat{\mathrm{Sp}}(1,1)$, where $s\in \mathrm{Im}(\mathbb{H})$. Then $u_V$ is reversible but not strongly reversible.	
\end{lemma}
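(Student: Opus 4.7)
The plan is to handle reversibility and the failure of strong reversibility separately, since the obstructions have different flavors. I will tacitly assume $s \neq 0$, since otherwise $u_V$ is the identity and the statement is vacuous.

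\textbf{Reversibility.} The key observation is that $s \in \mathrm{Im}(\mathbb{H})$, so I can pick a unit pure imaginary quaternion $\mu$ that is real-orthogonal to $s$ inside $\mathrm{Im}(\mathbb{H}) \cong \mathbb{R}^3$. Any such $\mu$ anticommutes with $s$, and combined with $\mu^{-1} = -\mu$ this yields $\mu s \mu^{-1} = -s$. I would then take $h := \mathrm{diag}(\mu, \mu)$, verify directly from $\bar\mu \mu = 1$ that $h \in \widehat{\Sp}(1,1)$, and check by a one-line matrix computation that $h u_V h^{-1} = u_V^{-1}$. This proves reversibility. Note however that $\mu^2 = -1$ forces $h^2 = -I$, so this particular intertwiner is \emph{not} an involution; the rest of the argument rules out the existence of any involution intertwiner.

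\textbf{Failure of strong reversibility.} Suppose for contradiction that some $h \in \widehat{\Sp}(1,1)$ with $h^2 = I$ satisfies $h u_V h^{-1} = u_V^{-1}$. The crucial structural step is to observe that, in the Siegel model, $u_V$ is parabolic with the unique boundary fixed point $[(0,1)^t] \in \partial \mathbf{H}^1_{\mathbb{H}}$, and this is also the unique fixed point of $u_V^{-1}$. The intertwining relation forces $h$ to preserve $[(0,1)^t]$, which at the matrix level means $h$ is lower triangular, $h = \begin{pmatrix} a & 0 \\ c & d \end{pmatrix}$. The $\widehat{\Sp}(1,1)$-conditions then reduce to $\bar a d = 1$ and $\bar a c + \bar c a = 0$, and a short computation of $h u_V h^{-1}$ collapses the conjugation equation to the single quaternionic identity $\bar a\, s\, a = -s$.

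The finishing step is purely algebraic over $\mathbb{H}$. The involution condition $h^2 = I$ implies $a^2 = 1$ in $\mathbb{H}$, which forces $a = \pm 1$: writing $a = a_0 + a'$ with $a' \in \mathrm{Im}(\mathbb{H})$, the identity $a^2 = a_0^2 - |a'|^2 + 2 a_0 a'$ can equal $1$ only when $a' = 0$. But then $\bar a s a = s$, contradicting $\bar a s a = -s$ since $s \neq 0$. The main obstacle I anticipate is the structural reduction that pins $h$ down to a lower-triangular form; once the common boundary fixed point of $u_V$ and $u_V^{-1}$ is exploited, the symplectic, conjugation, and involution conditions combine cleanly to rule out any involution intertwiner.
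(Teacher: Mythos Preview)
Your proposal is correct and follows the same overall skeleton as the paper's proof: exhibit a diagonal intertwiner built from a unit quaternion that anticommutes with $s$ for reversibility, then show any intertwiner must be lower triangular with diagonal entry $a$ satisfying $a^2=1$, forcing $a=\pm 1$ and a contradiction. Two small differences in execution are worth noting. For reversibility, the paper first conjugates $s$ to $ri$ with $r\in\mathbb{R}$ and then uses $j$ (so $h_V=\mathrm{diag}(xjx^{-1},xjx^{-1})$), whereas you pick $\mu\perp s$ in $\mathrm{Im}(\mathbb{H})$ directly; this is the same idea with one fewer step. For the non-strong-reversibility part, the paper simply asserts (after reducing to $s=\lambda i$) that any intertwiner has the lower-triangular form with the constraint $iai=\bar a^{-1}$, leaving this as a computation; you instead justify $b=0$ geometrically via the shared unique boundary fixed point of $u_V$ and $u_V^{-1}$, and then derive the equivalent constraint $\bar a\, s\, a=-s$ for general $s$. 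Your route is slightly cleaner and avoids the preliminary normalization of $s$, but the substance is the same.
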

\begin{proof}
Let $s=s_1i+s_2j+s_3ij\in \mathrm{Im}(\mathbb{H})$, then 
\[s=q(ri)q^{-1},\] where $r=\sqrt{s_1^2+s_2^2+s_3^2}$ and $q=(r+s_1)-s_3j+s_2ij$ (see~\cite[Lemma 1.2.2]{CG}). Therefore we have 
\[\begin{pmatrix}1&0\\s&1\end{pmatrix}=\begin{pmatrix}x&0\\0&x\end{pmatrix}\begin{pmatrix}1&0\\ ri&1\end{pmatrix}\begin{pmatrix}x&0\\0&x\end{pmatrix}^{-1},\] 
where $x=\frac{q}{|q|}$ (observe that $\begin{pmatrix}x&0\\0&x
\end{pmatrix}\in \widehat{\mathrm{Sp}}(1,1)$ as $|x|=1$). Now we have 
\[\begin{pmatrix}1&0\\ri&1\end{pmatrix}^{-1}=\begin{pmatrix}j&0\\0&j\end{pmatrix}\begin{pmatrix}1&0
\\ri&1\end{pmatrix}\begin{pmatrix}j&0\\0&j\end{pmatrix}^{-1}.\]
Therefore we have 
\[u_V^{-1}=h_Vu_Vh_V^{-1},\] where $h_V=\begin{pmatrix}xjx^{-1}&0\\0&xjx^{-1}\end{pmatrix}$. Hence $u_V$ is reversible. 

Now if there exists an $h=\begin{pmatrix}a&b\\c&d\end{pmatrix}\in \widehat{\mathrm{Sp}}(1,1)$ such that \[\begin{pmatrix}1&0\\\lambda i&1\end{pmatrix}^{-1}=\begin{pmatrix}a&b\\c&d\end{pmatrix}\begin{pmatrix}1&0\\\lambda i&1\end{pmatrix}\begin{pmatrix}a&b\\c&d\end{pmatrix}^{-1},\] then $h$ has the following form \[h=\begin{pmatrix}a&0\\c&\bar{a}^{-1}\end{pmatrix},\] where $\bar{a}c+\bar{c}a=0$ and $iai=\bar{a}^{-1}$. If possible suppose that $h$ is an involution, i.e., $h^2=I_2$, then $a=\pm1$. This is a contradiction, since $a\notin \mathbb{R}$. Hence $u_V$ is not strongly reversible.
\end{proof}
\begin{remark} 
Observe that in the above proof $h_V^2=-I_2$. 
\end{remark} 
\begin{lemma}\label{parabolicsp11}
	Let $g=\begin{pmatrix}\lambda &0\\\lambda s&\lambda\end{pmatrix}\in \widehat{\mathrm{Sp}}(1,1)$ be an arbitrary  parabolic element, where $\lambda, s\in \mathbb{C}$ with $|\lambda|=1$ and $\mathrm{Re}(s)=0$. Then $g$ is reversible.
\end{lemma}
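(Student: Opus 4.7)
The plan is to follow the strategy of \lemref{lemma1} and exhibit an explicit reversing element of the form $h=\diag(j,j)$. Since $s\in\mathbb{C}$ with $\mathrm{Re}(s)=0$, the scalar $s$ is already in the ``purely complex'' form $s=bi$ for some $b\in\R$, so no preliminary conjugation is required to normalize $s$ (unlike in \lemref{lemma1}, where an auxiliary diagonal was used first to rotate $s$ onto the $i$-axis). The scalar factor $\lambda$ is the only new feature, but it is inert because $jzj^{-1}=\bar z$ for every $z\in\C$, and $|\lambda|=1$ gives $\bar\lambda=\lambda^{-1}$.

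First I would compute $g^{-1}$ directly; using that $\lambda,s\in\C$ commute, one gets
\[ g^{-1}=\begin{pmatrix}\bar\lambda & 0 \\ -\bar\lambda s & \bar\lambda\end{pmatrix}. \]
Next I would verify that $h:=\diag(j,j)$ lies in $\widehat{\Sp}(1,1)$ by the short matrix calculation $\tr\bar h\,\widehat J\,h=\widehat J$, using $\bar j=-j$ and $j^2=-1$. Then I would compute $hgh^{-1}$ entry by entry; since every nonzero entry of $g$ (namely $\lambda$ and $\lambda s$) is complex, conjugation by $h$ turns each into its complex conjugate, and we obtain
\[ hgh^{-1}=\begin{pmatrix}\bar\lambda & 0 \\ \overline{\lambda s} & \bar\lambda\end{pmatrix}=\begin{pmatrix}\bar\lambda & 0 \\ \bar\lambda\bar s & \bar\lambda\end{pmatrix}. \]
Finally, $\mathrm{Re}(s)=0$ gives $\bar s=-s$, so this matrix coincides with $g^{-1}$ from the first step, establishing reversibility.

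I do not foresee any serious obstacle here: the argument is a direct quaternionic computation once the reversing element $\diag(j,j)$ is identified, and it runs parallel to the pure vertical translation case. One may note in passing that $h^2=-I_2$, so $h$ is not an involution, matching the remark after \lemref{lemma1}; this already suggests that one should not expect $g$ to be strongly reversible in general, although the lemma only claims reversibility.
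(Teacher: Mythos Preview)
Your proof is correct and is essentially identical to the paper's: both take the reversing element $h_1=\diag(j,j)\in\widehat{\Sp}(1,1)$ and verify $h_1gh_1^{-1}=g^{-1}$ by direct computation, using $jzj^{-1}=\bar z$ for $z\in\C$ together with $|\lambda|=1$ and $\bar s=-s$. Your write-up simply spells out the steps that the paper leaves as ``by direct computation,'' and your closing observation that $h^2=-I_2$ exactly anticipates the paper's remark following the lemma.
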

\begin{proof}
	Without loss of generality, we can assume $s=ri$ with $r\in \mathbb{R}^{\times}$ and  $\lambda=e^{i\theta}$. Now pick $h_1=\begin{pmatrix}
	j&0\\0&j\end{pmatrix}\in \widehat{\mathrm{Sp}}(1,1)$, then by direct computation, one can check that $h_1gh_1^{-1}=g^{-1}$.
\end{proof}
\begin{remark} 
	Observe that in the above proof $h_1^2=-I_2$. 
\end{remark} 
\begin{lemma}[Non-vertical translation]\label{lemma2}
	Let $u_{NV}=\begin{pmatrix}1&0&0\\s&1&\bar{a}\\a&0&1	\end{pmatrix}\in \widehat{\mathrm{Sp}}(2,1)$, where $s,a\in \mathbb{H}^{\times}$ with $s+\bar{s}=|a|^2$. Then the element $u_{NV}$ is strongly reversible. In particular, $u_{NV}$ is reversible. 
\end{lemma}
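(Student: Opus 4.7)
My plan is to exhibit an explicit involution $h\in\widehat{\Sp}(2,1)$ conjugating $u_{NV}$ to $u_{NV}^{-1}$. The first step is to compute $u_{NV}^{-1}$: writing $u_{NV}=I+N$ with $N$ the strictly off-diagonal part, one checks $N^3=0$, so $u_{NV}^{-1}=I-N+N^2$, and the identity $s+\bar s=|a|^2$ collapses this to
\[u_{NV}^{-1}=\begin{pmatrix}1&0&0\\ \bar s & 1 & -\bar a\\ -a & 0 & 1\end{pmatrix}.\]
In particular $u_{NV}^{-1}$ is itself a non-vertical translation, with parameters $(\bar s,-a)$, so by the Chen--Greenberg classification (Lemma~\ref{hycl}) it is conjugate to $u_{NV}$; the task is to realize the intertwiner by an involution.

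Next I will parametrize the possible $h$. Both $u_{NV}$ and $u_{NV}^{-1}$ have $[e_1]$ as their unique fixed direction, so $h$ must preserve this direction, forcing $h_{12}=h_{13}=0$. Comparing $h\, u_{NV}$ with $u_{NV}^{-1}\,h$ entry by entry further forces $h_{32}=0$, $h_{22}=h_{11}$, and $h_{33}=-a h_{11}a^{-1}$, leaving one residual scalar equation on $h_{21},h_{23},h_{31}$. The involution condition $h^2=I$ then forces $h_{11}^2=1$, so I will take $h_{11}=h_{22}=1$ and $h_{33}=-1$.

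It remains to choose $h_{21},h_{23},h_{31}\in\H$ satisfying simultaneously the involution identity $h_{21}=-\tfrac12\,h_{23}h_{31}$, the symplectic identities $h_{23}=-\bar h_{31}$ and $2\,\mathrm{Re}(h_{21})=|h_{31}|^2$, and the conjugation identity $\mathrm{Im}(\bar a\, h_{31})=-\mathrm{Im}(s)$. The key idea is to set $h_{31}:=-a\,\mathrm{Im}(s)/|a|^2$: this immediately satisfies the conjugation identity, and the remaining entries are then determined as $h_{23}=-\bar h_{31}$ and $h_{21}=\tfrac12|h_{31}|^2=|\mathrm{Im}(s)|^2/(2|a|^2)$; one checks directly that the involution identity is then automatic. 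The proof concludes by writing down the resulting explicit $h$ and verifying the three conditions ($h\in\widehat{\Sp}(2,1)$, $h^2=I$, and $h u_{NV} h^{-1}=u_{NV}^{-1}$) by block-by-block matrix computation.

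The main obstacle is locating the correct ansatz. A diagonal $h$ (the first natural guess) forces $h_{11}\in\{\pm 1\}$ and hence $s=\bar s$, which fails in general; allowing $h_{11}$ to be a unit imaginary quaternion recovers only $h^2=-I$, which is precisely the obstruction already seen for the vertical translation in Lemma~\ref{lemma1}. Strong reversibility in $\widehat{\Sp}(2,1)$ requires taking $h_{11}=1$, with the off-diagonal entries $h_{23},h_{31}$ absorbing $\mathrm{Im}(s)$; this is the structural difference between vertical and non-vertical translations that makes only the latter strongly reversible.
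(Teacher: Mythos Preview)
Your proposal is correct and follows essentially the same approach as the paper: both construct the explicit involution
\[
h=\begin{pmatrix}1&0&0\\ \tfrac12|d|^2&1&d\\ -\bar d&0&-1\end{pmatrix}\in\widehat{\Sp}(2,1)
\]
with $d$ chosen so that $s+da\in\R$; your choice $h_{31}=-a\,\mathrm{Im}(s)/|a|^2$ gives $d=h_{23}=-\bar h_{31}=-\mathrm{Im}(s)\,\bar a/|a|^2$, whence $da=-\mathrm{Im}(s)$ and $s+da=\mathrm{Re}(s)$, which is exactly the paper's condition. The paper simply writes down $h$ and leaves verification to the reader, whereas you supply the derivation; the content is the same.
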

\begin{proof} 
 Choose an involution $h_{NV}$ in $\widehat{\mathrm{Sp}}(2,1)$ such that
$$h_{NV}=\begin{pmatrix} 1 & 0 & 0 \\ \frac{1}{2} |d|^2 & 1 & d \\ -\bar d & 0 & -1 \end{pmatrix},$$
where $d$ is chosen so that $s+da$ is a real number. By direct computation, one can check that $h_{NV} u_{NV} h_{NV}^{-1}=u_{NV}^{-1}$. 
This completes the proof. 
\end{proof} 
\begin{lemma}\label{parabolicsp21}
	Let $g=\begin{pmatrix}\lambda&0&0\\\lambda s&\lambda&\lambda \bar{a}\\\lambda a&0&\lambda\end{pmatrix}\in \widehat{\mathrm{Sp}}(2,1)$ be an arbitrary  parabolic element, where $s, a\in \mathbb{R}(\lambda), |\lambda|=1$ and $s+\bar{s}=|a|^2$. Then $g$ is reversible. 
\end{lemma}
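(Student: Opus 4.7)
The plan is to generalise the argument of Lemma~\ref{parabolicsp11}, in which conjugation by $h_1 = jI_2$ inverts an analogous parabolic element, by adding a diagonal correction. Write $g = (\lambda I_3)\, u_{NV}^{s,a}$, where $u_{NV}^{s,a} = \begin{pmatrix}1&0&0\\ s&1&\bar a\\ a&0&1\end{pmatrix}$. A short calculation using $s+\bar s = |a|^2$ gives $(u_{NV}^{s,a})^{-1} = u_{NV}^{\bar s,-a}$; since all entries lie in $\R(\lambda)\cong \C$ (which after a preliminary $\Sp(2,1)$-conjugation we may take to be the standard complex subfield), $\bar\lambda I_3$ commutes with $u_{NV}^{s,a}$ and with its inverse, and hence $g^{-1} = (\bar\lambda I_3)\, u_{NV}^{\bar s, -a}$.

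The first step is to apply $h_1 = j I_3 \in \widehat{\Sp}(2,1)$. For any $x\in \C$ one has $-jxj = \bar x$; applied entrywise this yields
\[
h_1 g h_1^{-1} \;=\; \bar g \;=\; (\bar\lambda I_3)\, u_{NV}^{\bar s,\bar a}.
\]
Comparing with $g^{-1} = (\bar\lambda I_3)\, u_{NV}^{\bar s,-a}$, the only discrepancy lies in the entries containing $a$: the $(2,3)$ and $(3,1)$ entries of $h_1 g h_1^{-1}$ are $\bar\lambda a$ and $\bar\lambda\bar a$, whereas in $g^{-1}$ they are $-\bar\lambda\bar a$ and $-\bar\lambda a$.

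To absorb this discrepancy I would introduce a diagonal correction $k = \diag(1,1,\mu)$ with $\mu \in \C$, $|\mu|=1$, chosen so that $k\, u_{NV}^{\bar s,\bar a}\, k^{-1} = u_{NV}^{\bar s,-a}$. Using $(kMk^{-1})_{ij} = k_{ii}M_{ij}k_{jj}^{-1}$ the required identity collapses to the single condition $\mu\bar a = -a$, solved (for $a\neq 0$) by $\mu = -a/\bar a$; then $|\mu|=1$ is automatic and the diagonal conditions for $k\in \widehat{\Sp}(2,1)$ are easily verified. Because $k$ has complex entries it commutes with $\bar\lambda I_3$, so $k \bar g k^{-1} = (\bar\lambda I_3)(k u_{NV}^{\bar s,\bar a} k^{-1}) = g^{-1}$. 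Putting everything together, the reversing element is
\[
h \;:=\; k\, h_1 \;=\; \diag\!\big(j,\; j,\; -(a/\bar a)\,j\big) \in \widehat{\Sp}(2,1),
\]
and it satisfies $h g h^{-1} = g^{-1}$.

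The degenerate case $a=0$ forces $s$ to be purely imaginary in $\R(\lambda)$, so that $g$ splits as a vertical $\widehat{\Sp}(1,1)$-block together with a one-dimensional rotation; this case is handled directly by $h_1 = jI_3$ exactly as in Lemma~\ref{parabolicsp11}. The main technical obstacle is not conceptual but bookkeeping: one has to carefully track left- versus right-multiplication and the non-commutativity of $j$ with $\lambda$, $s$, $a$ when verifying $h_1 g h_1^{-1} = \bar g$ and when confirming membership of the constructed $h$ in $\widehat{\Sp}(2,1)$. After that, the argument is routine matrix computation.
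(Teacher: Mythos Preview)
Your argument is correct. After reducing to complex $\lambda,s,a$, conjugation by $jI_3$ indeed gives $\bar g$, and the diagonal correction $k=\diag(1,1,-a/\bar a)$ then carries $\bar g$ to $g^{-1}$; the membership checks for $k$ and $h_1$ in $\widehat{\Sp}(2,1)$ are immediate, and the case $a=0$ is handled as you say. Note also that your $h=\diag(j,\,j,\,-(a/\bar a)j)$ satisfies $h^2=-I_3$, which is exactly what is needed downstream for strong reversibility in $\PSp(n,1)$.

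The paper follows the same underlying idea---pass to complex entries and use $j$ to effect complex conjugation---but instead of your two-step diagonal construction it simply exhibits a single fixed matrix
\[
h_2=\begin{pmatrix}j&0&0\\ ij&j&0\\ 0&0&-ij\end{pmatrix}\in\widehat{\Sp}(2,1)
\]
and checks directly that $h_2gh_2^{-1}=g^{-1}$ (after an implicit normalisation of $a$ via conjugation by $\diag(1,1,\mu)$). Your presentation has the advantage of making transparent \emph{why} the reversing element exists: it is $jI_3$ corrected by a diagonal unitary; the paper's version has the advantage that the reversing element is independent of $a$. Either way, both yield $h^2=-I_3$, so the content is the same.
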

\begin{proof}
	Without loss of generality we may assume that $a, s, \lambda \in \mathbb{C}$ with $\lambda=e^{i\theta}$. Now pick $h_2=\begin{pmatrix}j&0&0\\ij&j&0\\0&0&-ij
	\end{pmatrix}\in \widehat{\Sp}(2,1)$. By direct computation, one can see that $h_2gh_2^{-1}=g^{-1}$. 
\end{proof}
\begin{remark} 
	Observe that in the above proof $h_2^2=-I_3$. 
\end{remark} 
\begin{lemma} \label{sp1}
Let $g$ be a hyperbolic element  in $\widehat{\Sp}(1,1)$. Then $g$ is reversible. Further, $g$ is  strongly reversible if and only if both eigenvalues of $g$ are real numbers. 
\end{lemma}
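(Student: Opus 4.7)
The plan is to reduce $g$ to a diagonal normal form and then handle the cases $\lambda\in\R$ and $\lambda\notin\R$ by explicit matrix computations. By the Chen--Greenberg classification (Lemma~\ref{hycl}), every hyperbolic element of $\widehat{\Sp}(1,1)$ is conjugate to $g=\diag(\lambda,\bar\lambda^{-1})$ for some $\lambda\in\C$ with $|\lambda|\neq 1$; since reversibility and strong reversibility are conjugacy invariants, I may work with $g$ in this form throughout.

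For reversibility I exhibit an explicit reverser in each case. If $\lambda\in\R$, the anti-diagonal matrix $h_1=\begin{pmatrix}0 & 1\\ 1 & 0\end{pmatrix}$ is an involution, lies in $\widehat{\Sp}(1,1)$, and satisfies $h_1 g h_1^{-1}=g^{-1}$; this yields reversibility and strong reversibility simultaneously. If $\lambda\in\C\setminus\R$, I take $h_2=\begin{pmatrix}0 & j\\ j & 0\end{pmatrix}$; using $j\mu=\bar\mu j$ for $\mu\in\C$, a short calculation gives $h_2\in\widehat{\Sp}(1,1)$ and $h_2 g h_2^{-1}=g^{-1}$. Here $h_2^2=-I_2$, so this only yields reversibility.

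The main step is to rule out an involutive reverser when $\lambda\in\C\setminus\R$. Suppose $h\in\widehat{\Sp}(1,1)$ is an involution with $hgh^{-1}=g^{-1}$. Since $|\lambda|\neq|\lambda|^{-1}$, the similarity classes $[\lambda]$ and $[\lambda^{-1}]$ of $g$ correspond to distinct one-dimensional eigenspaces over $\H$, and the reversing relation forces $h$ to interchange them. In the standard basis this gives $h=\begin{pmatrix}0 & \alpha\\ \beta & 0\end{pmatrix}$ with $\alpha,\beta\in\H^{\times}$. Expanding $hg=g^{-1}h$ and writing $\alpha=\alpha_1+\alpha_2 j$ with $\alpha_i\in\C$, the relation $\alpha\bar\lambda=\lambda\alpha$ combined with $\lambda\neq\bar\lambda$ forces $\alpha_1=0$, so $\alpha=cj$ for some $c\in\C^{\times}$; similarly $\beta=dj$ for some $d\in\C^{\times}$. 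The symplectic condition ${}^{t}\bar{h}\,\widehat{J}\,h=\widehat{J}$ reduces to $\bar\beta\alpha=1$, which unwinds to $d\bar c=1$. A direct quaternionic calculation then gives $\alpha\beta=-1$, so $h^2=-I_2\neq I_2$, contradicting the assumption that $h$ is an involution.

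The crux of the argument is this last calculation: although $c\in\C^{\times}$ initially looks like a free parameter, the symplectic constraint pins $d=\bar c^{-1}$, which forces $h^2=-I_2$. This is the structural obstruction to strong reversibility when the eigenvalues of $g$ are not real, mirroring the phenomenon already observed for vertical translations in Lemma~\ref{lemma1}.
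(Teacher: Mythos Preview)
Your proof is correct and follows essentially the same approach as the paper's: both reduce to the diagonal form $g=\diag(\lambda,\bar\lambda^{-1})$, use the anti-diagonal reversers $\begin{pmatrix}0&1\\1&0\end{pmatrix}$ and $\begin{pmatrix}0&j\\j&0\end{pmatrix}$, and then show any reverser $h$ must be anti-diagonal with entries in $\C j$ before deriving a contradiction with $h^2=I_2$. The only cosmetic differences are that you obtain anti-diagonality via the eigenspace-interchange argument (the paper does it by brute-force coordinate equations), and you conclude by computing $h^2=-I_2$ directly (the paper instead derives $c=\bar c$ from $cb=\bar cb=1$); neither change is a substantive departure.
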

\begin{proof}
	Let $g=\begin{pmatrix}re^{i\theta}&0\\0&r^{-1}e^{i\theta}\end{pmatrix}$, where $r>1$ and $0\leq \theta \leq \pi$. Then $g^{-1}=hgh^{-1}$, where $h=\begin{pmatrix}0&j\\j&0\end{pmatrix}\in \widehat{\mathrm{Sp}}(1,1)$, which proves that $g$ is reversible.   
	
	For the second part, let $g$ be a strongly reversible element. Then $g^{-1}=hgh^{-1}$ for some $h\in \widehat{\mathrm{Sp}}(1,1)$ with $h^2=I_2$. Suppose the eigenvalues of $g$ are not real numbers. Then we can assume $\theta \neq 0, \pi$. Now let $h=\begin{pmatrix}a&b\\c&d\end{pmatrix}$. Then we get 
	\begin{align}
	a&=r^2e^{i\theta}ae^{i\theta}\label{hypeq1}\\
	b&=e^{i\theta}be^{i\theta}\label{hypeq2}\\
	c&=e^{i\theta}ce^{i\theta}\label{hypeq3}\\
	d&=r^{-2}e^{i\theta}de^{i\theta}\label{hypeq4}.
	\end{align}
 From Equation \eqnref{hypeq1} we get  $a=a_0+a_1i+a_2j+a_3ij=r^2e^{i\theta}(a_0+a_1i+a_2j+a_3ij)e^{i\theta}$, which implies that $a_2=0=a_3$ as $r^2\neq 1$. If $a_0\neq 0$ (resp. $a_1\neq 0$), then $1=r^2e^{2i\theta}$, which implies that $\theta=0$ or $\pi/2$ or $\pi$. But $\theta \neq 0, \pi$ so $\theta=\pi/2$, which implies that $r^2=-1$ a contradiction. Therefore $a=0$.  
Similarly, from Equation \eqnref{hypeq4}, we get $d=0$.

 From Equation \eqnref{hypeq3}, we have  $c=c_0+c_1i+c_2j+c_3ij=e^{i\theta}(c_0+c_1i+c_2j+c_3ij)e^{i\theta}$ which implies that $c_0=0=c_1$ as $\theta \neq 0, \pi$.
 
 Again, we have $bc=1=cb=\bar{c}b$ since $h^2=I_2$ and $h\in \widehat{\mathrm{Sp}}(1,1)$. Therefore $c=\bar{c}$, i.e., $c\in \mathbb{R}\setminus \{0\}$. This is a contradiction to the fact that $c=c_2j+c_3ij$. Therefore both eigenvalues of $g$ are real numbers.
	
Conversely, if all the eigenvalues of $g$ are real numbers, then $\theta=0$ or $\pi$, i.e., $g=\begin{pmatrix}r&0\\0&r^{-1}\end{pmatrix}$ or $\begin{pmatrix}-r&0\\0&-r^{-1}\end{pmatrix}$. By direct computation, we get $g^{-1}=hgh^{-1}$, where $h=\begin{pmatrix}0&1\\1&0\end{pmatrix}\in \widehat{\mathrm{Sp}}(1,1)$ such that $h^2=I_2$. Therefore, $g$ is strongly reversible.
\end{proof}

\subsection{Proof of \thmref{mainth}}  \begin{enumerate}  
\medskip \item[(i)] {\it Let $g$ be elliptic in $\Sp(n,1)$}.  From the conjugation classification, we know that $g$ is semisimple with eigenvalues of norm $1$. It has $n$ similarity classes of positive eigenvalues (which may not all be distinct) and one similarity class of negative eigenvalue (which may coincide with one of the positive classes), for details, see \cite[Lemma 3.2.1 and Proposition 3.2.1]{CG}. So, up to conjugacy, we can assume \[g=\mathrm{diag}(\lambda_0, \lambda_1, \ldots, \lambda_n),\] where $\lambda_k\in \mathbb{C}$ with $|\lambda_k|=1$ for $k=0, 1, \ldots, n$. Since $j\lambda_k=\overline{\lambda}_kj$ for $\lambda_k\in \mathbb{C}$ then we have \[g^{-1}=hgh^{-1},\] where $h=\mathrm{diag} (j, j,\ldots, j)\in \mathrm{Sp}(n, 1)$. Therefore every elliptic element of $\mathrm{Sp}(n, 1)$ is  reversible. Note that $h^2=-I_{n+1}$ in $\Sp(n,1)$.

\medskip In the remaining part of the proof, we shall use the Siegel domain model and assume $g \in \widehat \Sp(n,1)$. 

\medskip \item[(ii)] {\it Let $g$ be hyperbolic}. Let $\lambda$ be the (null) eigenvalue class of $g$
with $|\lambda|>1$. Then $\V$ has a decomposition into 
$g$-invariant  {orthogonal} subspaces: $\V=\U \oplus \W$, where 
$\U$ is the direct sum of the one-dimensional  eigenspaces
$\V_{\lambda}$ and $\V_{\overline{\lambda}^{\,-1}}$
and $\W$ is the space-like orthogonal complement to $\U$.  The Hermitian form 
restricted to $\U$ has the signature $(1,1)$, hence $g|_{\U}$ can be considered 
as a transformation in $\widehat{{\rm Sp}}(1,1)$ and $g|_{\W}$ as an element in $\Sp(n-1)$, and $g=g|_{\U} \oplus g|_{\W}$. 
Now it follows from \lemref{sp1} and  \propref{realspn} that \[g^{-1}=hgh^{-1},\] where $h=\begin{pmatrix}h_1&0\\0&h_2\end{pmatrix}$ with $h_1=\begin{pmatrix}0&j\\j&0\end{pmatrix}$ and $h_2=I_{n-1}j=\mathrm{diag} (j, j, \ldots, j)$. Note that $h^2=-I_{n+1}$ in $\widehat{\Sp}(n, 1)$. 

\medskip \item[(iii)] {\it Let $g$ be a translation}. Again from conjugation classification, there are exactly two conjugacy classes of unipotent parabolic elements. One is the vertical translation, denoted by $U_V=\begin{pmatrix}u_V&0\\0&I_{n-1}\end{pmatrix}$, with minimal polynomial $(x-1)^2$. The other one is the non-vertical translation, denoted by $U_{NV}=\begin{pmatrix}u_{NV}&0\\0&I_{n-2}\end{pmatrix}$, whose minimal polynomial is $(x-1)^3$. 
Therefore we have $U_V^{-1}=H_VU_VH_V^{-1}$, where $H_V=\begin{pmatrix}h_V&0\\0&I_{n-1}\end{pmatrix}$ and $h_V$ is defined in \lemref{lemma1}. Also, we have $U_{NV}^{-1}=H_{NV}U_{NV}H_{NV}^{-1}$, where $H_{NV}=\begin{pmatrix}h_{NV}&0\\0&I_{n-2}\end{pmatrix}$ and $h_{NV}$ is described in \lemref{lemma2}.
Therefore unipotent elements are reversible. 

\medskip \item[(iv)] {\it Let $g$ be parabolic}. From the conjugation classification, see \cite[Proposition 3.4.1]{CG},  we know that  $g\in \widehat{\mathrm{Sp}}(n,1)$ has the Jordan decomposition $g=g_sg_u=g_ug_s$, where $g_s$ is a unique elliptic element and $g_u$ is a unique unipotent parabolic element. If $g$ is parabolic, then ${\mathbb H}^{n,1}$ has a $g$-invariant orthogonal decomposition:  ${\mathbb H}^{n,1}=\U \oplus \W$, where $\dim \U=2$ or $3$, $g|_{\U}$ is indecomposable, i.e., $\U$ cannot be written as a sum of $g$-invariant subspaces, and $g|_{\W}$ acts on $\W$ as an element of $\Sp(n-1)$ or $\Sp(n-2)$. Further, if $\lambda$ represents the null eigenvalue of $g$, then $g$ has minimal polynomial $(x-\lambda)^l$, where $l=2$ or $3$. Then, up to conjugacy, \[g=\begin{pmatrix}g_1&0\\0&g_2\end{pmatrix},\] where $g_1\in \widehat{\Sp}(1,1)$ or $\widehat{\Sp}(2,1)$ and $g_2\in \Sp(n-1)$ or $\Sp(n-2)$. Now we have $g^{-1}=h'gh'^{-1}$, where $h'=\begin{pmatrix}h_1&0\\0&h\end{pmatrix}$ or $h'=\begin{pmatrix}h_2&0\\0&h\end{pmatrix}$ with $h_1$ as in \lemref{parabolicsp11}, $h_2$ as in \lemref{parabolicsp21} and $h$ as in \propref{realspn}. Note that $h'^2=-I_{n+1}$ in $\widehat{\Sp}(n,1)$.
\end{enumerate}
This completes the proof. 

\section{Proof of \thmref{sr}} \label{streal}

(1) Suppose $g$ is hyperbolic. Let $\lambda$ be the (null) eigenvalue class of $T$
with $|\lambda|>1$. Then $\V$ has a decomposition into 
$g$-invariant  {orthogonal} subspaces: $\V=\U \oplus \W$, where 
$\U$ is the direct sum of the one-dimensional  eigenspaces
$\V_{\lambda}$ and $\V_{\overline{\lambda}^{\,-1}}$
and $\W$ is the space-like orthogonal complement to $\U$.  The Hermitian form 
restricted to $\U$ has the signature $(1,1)$, hence $g|_{\U}$ can be considered 
as a transformation in $\widehat{\Sp}(1,1)$ and $g|_{\W}$ as an element in $\Sp(n-1)$. 
The result now follows from \lemref{sp1} and \thmref{strealspn}. 
 
\medskip \noindent (2) Suppose $g$ is elliptic. Then $g$ has a negative or indefinite type eigenvalue $\lambda$. Let $\mathrm{L}_{\lambda}$ be a one-dimensional subspace spanned by the corresponding eigenvector. In the orthogonal complement $\mathrm{L}_{\lambda}^{\perp}$, $g$ restricts to an element in $\Sp(n)$. Thus, up to conjugacy, we may consider $g$ as: 
$g=\begin{pmatrix} \lambda & 0 \\ 0 & g_1 \end{pmatrix}$, 
where $g_1 \in \Sp(n)$. 
It is easy to see that the only strongly reversible elements of $\Sp(1)$ are $1$ and $-1$.  The result now follows from \thmref{strealspn}. 

\medskip \noindent (3) Follows from \lemref{lemma1}, and (4) follows from \lemref{lemma2}. 

\medskip \noindent  (5)  Suppose $g$ is parabolic. Then  $g$ has the Jordan decomposition $g=g_s g_u$, where $g_s$ is semisimple, $g_u$ is unipotent, and $g_s g_u=g_u g_s$. If  $g$ is strongly reversible, then clearly $g_s$ and $g_u$ are strongly reversible.  
The null eigenvalue $\lambda$ of $g$ will be a negative eigenvalue for $g_s$, and hence the assertion necessarily follows from  (2) and the unipotent cases. 

Conversely, suppose the hypothesis holds. If $g$ is parabolic, then ${\mathbb H}^{n,1}$ has a $g$-invariant orthogonal decomposition:  ${\mathbb H}^{n,1}=\U \oplus \W$, where, $\dim \U=2$ or $3$, $g|_{\U}$ is indecomposable, i.e., $\U$ cannot be written as a sum of $g$-invariant subspaces, and $g|_{\W}$ acts on $\W$ as an element of $\Sp(n-1)$ or $\Sp(n-2)$. Further, if $\lambda$ represents the null eigenvalue of $g$, then $g$ has minimal polynomial $(x-\lambda)^l$, where $l=2$ or $3$. The given hypothesis implies that $g|_{\U}$ and $g|_{\W}$ are strongly reversible. Hence $g$ is strongly reversible. 

This completes the proof. 
\section{Proof of \thmref{strealpspn1}}\label{strealpsp}

From the proof of \thmref{mainth}, we see that for $g$ elliptic or hyperbolic, $g=hg^{-1} h^{-1}$, where $h^2=-I_{n+1}$ in $\Sp(n,1)$. Hence $h^2=I_{n+1}$ in $\mathrm{PSp}(n,1)$. Thus, $g$ is strongly reversible in $\mathrm{PSp}(n,1)$. 

For $g$ a vertical translation, choose $H_V'=\begin{pmatrix}h_V&0\\0&I_{n-1}j\end{pmatrix}$ in $\widehat{\Sp}(n,1)$, where $h_V$ is defined in \lemref{lemma1}. Hence $H_V'^2=-I_{n+1}$ projects to the identity element. We have already seen in \lemref{lemma2} that a non-vertical translation is strongly-reversible. Consequently, every translation is strongly reversible in $\mathrm{PSp}(n,1)$. 

An arbitrary parabolic element is strongly reversible in $\mathrm{PSp}(n,1)$ follows from the last part of the proof of \thmref{mainth}, and \lemref{parabolicsp11} and \lemref{parabolicsp21}. 

This completes the proof.

\medskip
\textbf{Acknowledgement:} The authors would like to thank Sagar B. Kalane of IISER Mohali for many helpful discussions. 
The authors thank the referee for many helpful comments and suggestions. 

 \end{document}